\documentclass{article}

\usepackage{graphicx,amscd,amsmath,amssymb,verbatim}
\usepackage[dvips]{hyperref}
\usepackage[TS1,OT1,T1]{fontenc}

\usepackage{multido}

\usepackage{graphicx}
\usepackage{psfrag}                      
\usepackage{amssymb,amsthm,amsmath,eufrak, amscd} 
\usepackage{fancyhdr}                 
\usepackage[all]{xy}                       
\usepackage[latin1]{inputenc}
\usepackage{dsfont}
\usepackage{upgreek}

\usepackage{verbatim}

\typeout{Befehl.tex: `Befehle und Abk.AN|rzungen' V1.8a \space\space
         <15.08.02> (Jochen Ditsche)}

\setlength{\parindent}{1em}
\setlength{\textwidth}{15.7cm}
\setlength{\textheight}{23.5cm}
\setlength{\topmargin}{-15mm}
\setlength{\headheight}{30pt}
\setlength{\headsep}{20pt}
\setlength{\oddsidemargin}{4mm}
\setlength{\evensidemargin}{-.5mm}
\setlength{\footskip}{35pt}

\newtheorem{satz}{Satz}[section]

\newtheorem{theo}[satz]{Theorem}
\newtheorem{lem}[satz]{Lemma}
\newtheorem{prop}[satz]{Proposition}

\theoremstyle{definition}

\newtheorem{rem}[satz]{Remark}
\newtheorem{df}[satz]{Definition}
\makeatletter 
\@addtoreset{equation}{section}
\makeatother

\DeclareMathAlphabet\mathbfcal{OMS}{cmsy}{b}{n}
\def\R{\mathbb{R}}
\def\N{\mathbb{N}}
\newcommand{\Om}{\Omega}
\def\C{\mathcal{C}}
\def\d{\epsilon}
\def\p{\mathit{e}}
\def\L{\mathcal{L}}

\def\A{\mathcal{A}}
\def\D{\mathcal{D}}

\def\E{\mathcal{E}}
\def\F{\mathcal{F}}

\def\Meas{\mathcal{M}}

\newcommand{\la}{\lambda}
\newcommand{\na}{\nabla}
\newcommand{\al}{{\alpha}}

\newcommand{\eps}{\varepsilon}

\newcommand{\wto}{\rightharpoonup}
\newcommand{\li}{{\langle}}
\newcommand{\re}{{\rangle}}
\newcommand{\Li}{{\Big\langle}}
\renewcommand{\Re}{{\Big\rangle}}

\def\pa{\partial}
\def\esssup{{\rm ess\ sup}}
\renewcommand{\div}{{\rm div}}

\DeclareMathOperator{\dom}{dom}
\DeclareMathOperator{\epi}{epi}
\DeclareMathOperator{\inter}{int}

\newcommand{\cR}{{\mathbb R}}
\newcommand{\non}{\nonumber}

\begin{document}

\title{Measure-valued solutions for models of
ferroelectric material behavior}
\author{Nataliya Kraynyukova%
\thanks{Corresponding author: Nataliya Kraynyukova, Fachbereich Mathematik, 
Technische Universit\"at Darmstadt, 
Schlossgartenstrasse 7, 64289 Darmstadt, Germany, email: kraynyukova@mathematik.tu-darmstadt.de, 
Tel.: +49-6151-16-3287}\,
, Sergiy Nesenenko%
\thanks{Sergiy Nesenenko, Fachbereich Mathematik, Technische Universit\"at Darmstadt, 
Schlossgartenstrasse 7, 64289 Darmstadt, Germany, email: nesenenko@mathematik.tu-darmstadt.de, Tel.: +49-6151-16-2788}
}

\date{\today}
\maketitle
\begin{abstract}
In this work we study the solvability of the initial boundary value problems, which model a quasi-static nonlinear behavior of ferroelectric materials. Similar to the metal plasticity the energy functional of a ferroelectric material can be additively decomposed into reversible and remanent parts. The remanent part associated with the remanent state of the material is assumed to be a convex non-quadratic function $f$ of internal variables. 
In this work we introduce the notion of the measure-valued solutions for 
the  ferroelectric models and show their existence in the
rate-dependent case assuming the
 coercivity of the function $f$. Regularizing the energy functional by a quadratic positive
 definite term, which can be viewed as hardening,
  we show the existence of measure-valued solutions
for the rate-independent and rate-dependent problems avoiding
 the coercivity assumption on $f$. \end{abstract}




\section{Introduction and setting of the problem}
\label{form}
Due to the ability of ferroelectric materials to transform a mechanical action into an electrical impulse and vice versa they are being used in a broad range of modern engineering devices as actuators and sensors. Recent technological developments enabled the reduction of the production costs for ferroelectric ceramics and thereby increased the interest to use them in the novel implementations. Demand for the reliable mathematical models, which on the one hand are capable to describe a complicated nonlinear electromechanical behavior of ferroelectric devices in order to optimize their design and predict failure processes and on the other hand are simple enough for numerical implementations, caused a rapid progress in this field in the last years.
 A brief review of recent advances in modeling of ferroelectric material behavior can be found 
 in \cite{La04}. In the present work we study the solvability of the nonlinear initial boundary value
 problems associated with phenomenological constitutive models of ferroelectrics 
 \cite{HuFl01,Kam01, La02, LaMa02,RomSchr05,SchrRom05}. 
Similar to models in the metal plasticity the type of ferroelectric models considered here
 is formulated within a thermodynamic framework by using the standard
 material relevant description method of an energy function and a flow rule.
  In contrast to the micro-electromechanical models, which contain a large number of internal 
  variables standing for the distribution and the volume interaction of ferroelectric domains, 
  the main goal of the phenomenological models mentioned above is to improve the speed and 
  the robustness of numerical implementations by keeping the number of internal variables 
  as small as possible. The models presented in \cite{HuFl01,Kam01, La02, LaMa02,RomSchr05,SchrRom05} and studied in this work use as internal variables only the remanent strain and 
  the remanent polarization. 
\paragraph{Setting of the problem.}
 The model equations are formulated as follows. 
 Let $\Om\subset\R^3$ be an open bounded set with the $C^1$-boundary
$\pa\Om$  and $S^3$ denote the set of symmetric $(3\times3)$-matrices. Unknown are the displacement field $u(t,x)\in\R^3$, the Cauchy stress tensor
$\sigma(t,x)\in S^3$, the remanent strain tensor $r(t,x)\in S^3$, the
electric potential $\phi(t,x)\in\R$, the vector of electric
displacements $D(t,x)\in\R^3$ and the vector of remanent polarization $P(t,x)\in\R^3$
in a material point $x$ at time $t$.  The symbols 
$$\eps(u(t,x))=\frac{1}{2}(\nabla_x u(t,x)+(\nabla_x u(t,x))^T)\in S^3$$ and 
 $$E(\phi(t,x))=-\nabla_x\phi (t,x)\in\R^3$$ denote the linearized strain tensor and
  the electric field vector, respectively ($\eps$ and $E$ for short). The fundamental assumption of the models under consideration is that  the strain tensor $\eps$ and the vector of electric displacements $D$ can be additively decomposed into reversible and irreversible parts, i.e. $$\eps=(\eps-r)+r,\hspace{2ex}D=(D-P)+P.$$
 In this case $\eps-r$ and $D-P$ are reversible and $r$ and $P$ are irreversible parts of $\eps$ and $D$, respectively. 
For $(t,x)\in \Om_T:= (0,T)\times \Om$
 the unknown functions satisfy the following system of equations
  \begin{subequations}
\label{eq:no0}
\begin{alignat}{3}
-{\rm div}\ \sigma&=b,&&\ &&\label{eq:no1}\\
 {\rm div}\ D&=q,&&\ &&\label{eq:no2}\\
  \sigma&=\C&&(\eps - r) - \p^{T}&& E,\label{eq:no3}\\
 D&= \p&&(\eps - r) +\d &&E+ P,\label{eq:no4}
 \end{alignat}
 \vspace{-6mm}
 \begin{equation}
\qquad\qquad {r_t\choose P_t}\in \pa g\left({\sigma -f_r\choose E-f_P}  -\L {r\choose P}\right)\label{eq:no5}
 \end{equation}
 completed by the initial conditions
\begin{align}
r(0,x)=r^{0}(x),\ P(0,x)= P^{0}(x),\ x\in\Om \label{eq:no6}
\end{align}
and the homogeneous Dirichlet boundary conditions
\begin{align}
u(t,x)=0,\ \phi(t,x)=0,\ (t,x)\in[0,T)\times \pa\Om. \label{eq:no7}
\end{align} 
\end{subequations}
 The equations (\ref{eq:no1}) and (\ref{eq:no2}) are the
equilibrium equation and the Gauss equation in a quasi-static case, respectively.  
Here, the function $b(t,x)\in\R^3$ denotes a given body force and
$q(t,x)\in\R$ is a given density of free charge carriers. 
  The functions
  $g, f :S^3\times\R^3\to\R$ in (\ref{eq:no5}) denote constitutive functions, the form of which are usually
   determined by experiments. 
  Based on the thermodynamical considerations we give in the next two paragraphs the precise conditions, which
  $g$ and $f$ should satisfy, and discuss the equation (\ref{eq:no5}).
  The mapping $\L:S^3\times\R^3\to S^3\times\R^3$ in equation (\ref{eq:no5}) is linear symmetric and positive semi-definite and stands for the hardening effects. This mapping is not contained in the engineering models considered here and is introduced because of mathematical reasons, which are discussed in the last two paragraphs of the introduction. An overview of the previous results concerning the existence theory for the ferroelectric models and the structure of the present work can be found in the last paragraph of this section as well.
 \vspace{1.5ex}\\
  Due to the additive splitting of  the strain tensor $\eps$ and the vector of electric 
displacements $D$ into the reversible and 
irreversible parts, the constitutive relations (\ref{eq:no3}), (\ref{eq:no4})
can be equivalently rewritten as follows
 \begin{align}
\label{const_piez}
\begin{array}{r}
\sigma=\\
E=
\end{array} 
\begin{array}{l}
(\C+\p^{\rm T}\d^{-1}\p)(\eps-r)-  \p^{\rm T}\d^{-1}(D-P)\\
-\d^{-1}\p(\eps-r)+ \d^{-1}(D-P),
\end{array}
\end{align}
that implies that the reversible parts of $\eps$ and  $D$  satisfy 
the constitutive equations of linear piezoelectricity. Here the mappings $\C:S^3\to S^3,\ \d:\R^3\to \R^3,\  \p:S^3\to\R^3$ are 
material dependent elastic, dielectric  and piezoelectric tensors, respectively.
In the engineering literature 
 \cite{HuFl01,Kam01,La02, LaMa02,RomSchr05,SchrRom05} 
the entries of the constitutive tensors $\C,\d$ and $\p$ often depend on the internal variables $r$ 
and $P$. For example, in \cite{LaMa02} the tensor $\p$ has the following form
\begin{align}
\label{piez_ten}
\p_{kij}=\frac{|P|}{P_s}(\p_{33}n_kn_in_j+\p_{31}n_k\al_{ij}+\frac 12 \p_{15}(n_i\al_{jk}+n_j\al_{ik})),
\end{align} 
where $n=\frac{P}{|P|}$, $\al_{ij}=\delta_{ij}-n_in_j$, $\p_{ij}$ are constants and $P_s$ is the remanent polarization saturation constant. 
However, because of the difficulties arising in the mathematical treatment of the problem 
(\ref{eq:no0}),
in the present work we suppose that the tensors $\C,\d$ and $\p$ are independent of the 
internal variables $r$ and $P$.
Our approach to the derivation of 
the existence of the solutions for (\ref{eq:no0}) relies heavily on the $L^p$-existence theory for  
elliptic systems with $2<p<\infty$. In order to apply such a theory to our purposes
we have to require that the entries of the tensors  $\C,\ \d$ and $\p$ are continuous functions of
$x\in\overline{\Om}$. But since it is expected that the functions
$r,P$ belong only to $L^p(\Om)$ for some $2<p<\infty$ one can not guarantee that the mappings $\C,\ \d$ and $\p$ 
possess this regularity. 
Therefore,
 we suppose that the tensors $\C,\ \d$ and $\p$ are independent of $P$ and $r$
 and continuous functions of $x\in\overline{\Om}$.
 Additionally, according to the engineering models considered here we assume that the mappings $\C$, $\d$ and $\p$ are 
linear and bounded and that $\C$ and $\d$ are symmetric and positive definite 
uniformly with respect to $x\in\Om$.

The method presented in this work can be easily generalized to the case of nonhomogeneous Dirichlet, Neumann or mixed boundary conditions.

\paragraph{ Thermodynamical considerations and choices of the function $g$.}
A general form of the energy function corresponding to the models considered here
 can be derived by using the constitutive relations (\ref{const_piez}) and 
 the Clausius-Duhem inequality. Although the different types of thermodynamic potentials are used in the literature (for example, the Helmholtz free energy function in \cite{LaMa02,La04} 
 or the enthalpy function in \cite{RomSchr05}) it is typical in modeling of the nonlinear behavior of ferroelectric materials
  to derive  model equations by means of the Helmholtz free energy in the form
   $\Psi=\Psi(\eps,D,r,P)$. 
   The main requirement is that the function $\Psi$  satisfies the Clausius-Duhem inequality
\begin{align}
\label{sec_law}
0\leq \sigma\dot{\eps}+E\dot{D}-\dot{\Psi}=(\sigma-\Psi_\eps)\dot\eps+(E-\Psi_D)\dot D-\Psi_r\dot r-\Psi_P \dot P.
\end{align}
The arguments in the thermodynamics of irreversible processes yield 
 that the equations 
\begin{align}
\label{rev_part}
\sigma=\Psi_\eps(\eps,D,r,P)\ {\rm and}\ E=\Psi_D(\eps,D,r,P)
\end{align}
 hold. The Clausius-Duhem inequality can be then reduced to the following inequality
\begin{align}
\label{sec_law1}
0\leq -\Psi_r\dot r-\Psi_P \dot P.
\end{align}
Integrating the relations (\ref{rev_part}) and using (\ref{const_piez}) we conclude that the free energy function can be represented in the form
\begin{align}\label{Free_Energy}
\Psi(\eps,D,r,P)=\Psi_{\rm rev}+\tilde f(r,P),
\end{align}
where $\Psi_{\rm rev}=\frac 12\left((\C+\p^{\rm T}\d^{-1}\p)(\eps-r),\eps-r\right)-\left(\p^{\rm T}\d^{-1}(D-P),\eps-r\right)+\frac 12 \left(\d^{-1}(D-P),D-P\right)$ is the reversible part of the energy. The function $\tilde f$ corresponds to the remanent state of the material under consideration and
is given by
\[\tilde f(r,P)=f(r,P)+\frac{1}{2}|{\cal L}^{1/2}(r, P)^T|^{2}.\]
The authors of the engineering models \cite{HuFl01,La02, LaMa02,RomSchr05,SchrRom05} make different assumptions concerning the form of the function $f$.  Their choices are usually based on the experimental results. Several examples of the function $f$ are given below. The quadratic term with the linear positive semi-definite operator ${\cal L}$ is not contained in the models considered here. It can be regarded as a hardening term and the reason of its introduction is discussed in the next two paragraphs. 

Since the entries of the given tensors $\C,\p$ and $\d$ are assumed to be continuous functions and  independent of $r$ and $P$, using the expression for $\Psi$ we rewrite the Clausius-Duhem inequality (\ref{sec_law1}) as follows
\begin{align}
\label{sec_law2}
0\leq (\sigma-\tilde f_r)\dot r + (E-\tilde f_P) \dot P.
\end{align}
The second law of thermodynamics (\ref{sec_law2}) restricts the choice of the function $g$
 in the equation (\ref{eq:no5}). The inequality (\ref{sec_law2}) holds if $g$ is a proper
 convex function. 
 Additionally,  we suppose that the function $g$ is lower semi-continuous.  
 In most models in \cite{HuFl01,La02, LaMa02,RomSchr05,SchrRom05} 
 the function $g$ is chosen as an indicator function of some bounded, closed and 
 convex set $K\subset S^3\times\R^3$ with $0\in K$, namely,
\begin{align}
\label{rate_ind_g}
g=I_K=\begin{cases}0,& x\in K,\\ +\infty,& x\not\in K.\end{cases}
\end{align} 
 This choice of the function $g$ corresponds to a rate-independent process. 
 Rate-dependent effects such as time-dependent relaxation of ferroelectric polycrystals have been observed 
experimentally as well. To describe the rate-dependent behavior of a
ferroelectric material  in \cite{La02} the function $g$
is chosen in the form of a polynomial. In the rate-dependent case we  require that the function 
$g$ satisfies the following two-sided estimate with $c_1,c_3>0$ and $c_2,c_4\geq 0$
\begin{eqnarray}\label{grow_cond1}
c_1|v|^p-c_2\le g(v)\le c_3|v|^p+c_4,
\end{eqnarray}
which holds for
any $v\in S^3\times\R^3$. The condition (\ref{grow_cond1}) implies that
\begin{eqnarray}
\label{grow_cond1'}
g^*(v)\ge d_1|v|^{p^*}-d_2,\label{grow_cond2}
\end{eqnarray}
for $d_1>0$, $d_2\geq 0$ and any $v\in S^3\times\R^3$, where $g^*$ is 
the Legendre-Fenchel conjugate of $g$ 
(see Appendix~\ref{BasicsConAna} for basics on convex analysis). Throughout the whole
work we assume that the number $p$ satisfies
 $2\leq p<\infty$ with $p^{*}$ such that $1/p+1/p^*=1$. 

\paragraph{ Possible choices of the function $f$.} In most models in the engineering literature the remanent part of the energy $f:S^3\times\R^3\to\R$  is given by a convex function whose domain $\dom(f)$ is a convex (possibly unbounded) open subset of $S^3\times\R^3$.
In \cite{RomSchr05,SchrRom05,LaMa02} it is assumed  that the function $f=f(r,P)$ depends  only on $P$. In particular, in \cite{RomSchr05,SchrRom05} $f$ has the following form
\begin{align}
\label{schr_f}
f(r,P)&=f(P)=\begin{cases}
\frac{P_s}{2}\left((1+\frac{(P,a)}{P_s})\ln(1+\frac{(P,a)}{P_s})+(1-\frac{(P,a)}{P_s})\ln(1-\frac{(P,a)}{P_s})\right),& |(P,a)|<P_s,\\
+\infty,& |(P,a)|\geq P_s,
\end{cases}
\end{align}
where $a\in\R^3$ is a given direction with $\|a\|=1$ and $P_s$ is a saturation constant,
and in \cite{LaMa02} the function $f$ is of the form
\begin{align}
\label{lama_f}
f(r,P)&=f(P)=\begin{cases}
-P_s^2\left(\ln(1-\frac{|P|}{P_s})+\frac{|P|}{P_s}\right),& |P|<P_s,\\
+\infty,& |P|\geq P_s.
\end{cases}
\end{align}
In Theorem \ref{existMain_rate_dep} we suppose that if the function $f$ depends only on $P$, then
 it has to satisfy the following coercivity condition
\begin{align}
\label{grow_cond3''}
f(r,P)=f(P)\geq a_1|P|^2-a_2,\ a_1>0,\ a_2\geq 0.
\end{align} 
The coercivity condition (\ref{grow_cond3''}) is satisfied by the function $f$ given by (\ref{lama_f}), but not by the function $f$ in (\ref{schr_f}). Thus, the result of Theorem \ref{existMain_rate_dep} can not be applied to the function $f$ defined by (\ref{schr_f}).

In \cite{La02} and \cite{HuFl01} the function $f$ depends on both internal variables $r$ and $P$ and satisfies the following coercivity condition 
\begin{eqnarray}
\label{grow_cond3'}
f(z)\geq b_1|z|^p-b_2,\ b_1>0,\ b_2\geq 0,\ z=(r,P).
\end{eqnarray}

The present work is especially focused on the rate-dependent processes with the function $g$ satisfying the polynomial growth condition (\ref{grow_cond1}).  
 Under the condition
(\ref{grow_cond1}) we prove the existence of the measure-valued solutions in the sense of Definition~\ref{WeakSol} for the model (\ref{eq:no0}) without the regularizing term, i.e. with  $\L=0$ (see Theorem \ref{existMain_rate_dep}). However, in this case we have to assume that the function $f$ satisfies one of the coercivity conditions given above, i.e. either
 (\ref{grow_cond3''}) if $f$ depends only on $P$ or (\ref{grow_cond3'}) if $f$ depends on both variables $r$ and $P$.  If the linear mapping $\L$ in (\ref{eq:no5})
   is positive definite, then we are able to prove the existence of measure-valued 
   solutions for the problem (\ref{eq:no0}) in the rate-dependent case
    without assuming the coercivity of the function $f$ 
   (see Theorem \ref{existMain_rate_indep}) as well.  
   
    To prove the existence of measure-valued solutions  for the rate-independent case (Theorem \ref{existMain_rate_indep}) the linear symmetric and positive definite
     mapping $\L:S^3\times\R^3\to S^3\times\R^3$ in the equation (\ref{eq:no5}) is introduced.
  As we mentioned in the previous paragraphs this mapping is not contained in the engineering models. The additional quadratic hardening term $(\L z,z)$ with $z={r\choose P}$
  in the energy function $\Psi(\eps,D,r,P)$ regularizes the model 
  (\ref{eq:no0}) with $g$ given by (\ref{rate_ind_g}) and the existence of measure-valued solutions in the sense of 
  Definition~\ref{WeakSol} can be obtained. 
  The well-posedness of the problem (\ref{eq:no0}) in the rate-independent case 
  without the regularizing term $\L$ is an open problem at the moment.  
  
 \paragraph{Previous results and structure of the present work.}
 The first existence result for the nonlinear ferroelectric models in the rate-independent case is obtained in \cite{MiTi06} via the energetic approach.
However, in order to use the compactness argument the authors of \cite{MiTi06}  
 regularize the energy function by the additive quadratic gradient term of internal variables 
 $\li\L\na z,\na z\re$ with $z=(r,P)$ and positive definite $\L$, which is not present in the energy function (\ref{Free_Energy}). For such a modification of the model the authors of \cite{MiTi06} prove the existence of strong solutions. Hereby the tensors $\C,\p,\d$ are allowed to depend on the internal variables. 
 For the free energy regularized by the term $\li\L z, z\re$, which can be 
regarded as the hardening, we mention the following existence results \cite{KraAlb11,MiTe04}. In these works the tensors $\C,\p,\d$ are independent of the internal variables. The
derivations of these results require that the Nemytskii operator $F:L^2(\Om)\to\bar{\R}$, generated
by the function $f:S^3\times\R^3\to\bar{\R}$, is Frechet differentiable in $L^2(\Om)$. The
last requirement is satisfied if and only if the function $\na f$ is affine.
  In \cite{KraAlb11,MiTe04} the existence and uniqueness of the strong solution is shown in the rate-independent case
  under the assumption $F\in C^{2,{\rm Lip}}(L^2(\Om, S^3\times\R^3))$ and $F\in C^{3}(L^2(\Om, S^3\times\R^3))$, respectively. In the rate-dependent case
it is believed that there are no mathematical results concerning 
 the existence of solutions. In the present work we show the existence of measure-valued solutions of the rate-dependent problem (\ref{eq:no0}), when $\L=0$, i.e. the energy function in (\ref{Free_Energy}) does not contain regularizing terms. In Section~\ref{main_result} we introduce and motivate the notion of measure-valued solutions for the ferroelectric model formed by equations (\ref{eq:no0}) as well as formulate the main existence results
 in Theorems \ref{existMain_rate_dep} and \ref{existMain_rate_indep} 
 for the rate-dependent case  with $\L=0$ and for both rate-dependent and rate-independent cases with the positive definite mapping  $\L$, respectively. For the rate-dependent model with $\L=0$,  we assume that $f$ satisfies either the coercivity 
 condition (\ref{grow_cond3''}) or (\ref{grow_cond3'}). The proofs of these
 existence results are given in the subsequent sections.
  We note here also that the measure-valued solutions generalize naturally the notion
of the strong solution of the problem (\ref{eq:no0}) investigated 
previously in \cite{KraAlb11,MiTe04,MiTi06}. 
 
In Section~\ref{exist_ellipt_ferro} we show that for some given functions $r$ and $P$ the system of equations (\ref{eq:no1})-(\ref{eq:no4}), (\ref{eq:no7}) is an elliptic system of partial differential equations. Since the proof of the main existence results to the problem (\ref{eq:no0}) relies heavily on 
the existence theory for the equations of linear piezoelectricity,
we use $L^p$-existence theory for elliptic systems of partial differential equations and present the main properties of the solutions of the linear piezoelectricity model in full details  in Section~\ref{exist_ellipt_ferro}. 

In Section~\ref{red} we reduce the system (\ref{eq:no0}) to the evolution problem (\ref{ev_eq}), (\ref{ini_ev_eq}). In Section~\ref{time_discr} we use the Rothe time-discretization method to construct an approximating problem (\ref{CurlPr3Dis}), (\ref{dis_ini}) and show that it has a unique solution. In the following Sections~\ref{apriori}, \ref{Esistence} we show the convergence of the approximating sequence and prove the main existence results.

 
\section{Statement of main results} 
\label{main_result}
In this section we introduce the notion of the measure-valued solutions of the problem 
(\ref{eq:no0}) and then state  the main results of the work.
For completeness, we give the definition of the strong solutions of the problem 
(\ref{eq:no0}). 
\begin{df}[Strong solution]
\label{StrongSol}
 A function $(u,\phi,r, P)$ such that
\[(u,\phi)\in W^{1,p^*}(0,T; W^{1,p^*}_0(\Om, {\R}^3 \times{\R})), \ \
(r, P)\in W^{1,p^*}(0,T;L^{p^*}(\Om, {S}^3\times{\R}^3))\]
is called the {\it strong solution} of the initial boundary value
problem (\ref{eq:no0}), if for every $t\in [0,T]$
the function $(u(t),\phi(t))$ is the weak solution of the boundary value problem
 (\ref{eq:no1}) - (\ref{eq:no4}), (\ref{eq:no7}) with the given $r(t)$ and
$P(t)$ and the evolution problem (\ref{eq:no5}), (\ref{eq:no6}) is satisfied pointwise.
\end{df}
Next, we define the notion of the measure-valued solutions for the
initial boundary value problem (\ref{eq:no0}).
\begin{df}[Measure-valued solution]
\label{WeakSol}
 A function $(u,\phi,r, P, \tau)$ such that
\[u\in W^{1,{p^*}}(0,{T};W^{1,{p^*}}_0(\Om,{\R}^3)),\ \ \phi\in W^{1,p^*}(0,{T};W^{1,p^*}_0(\Om,{\R})),\]
\[z\equiv(r, P)\in 
W^{1,{p^*}}(0,{T};L^{p^*}(\Om,{S}^3\times{\R}^3)),\ \ \tau\in L^\infty_w(\Om_T,\Meas (S^3\times\R^3)) \]
with 
\[z(t,x)=\int_{S^3\times\R^3}\xi\  \tau_{t,x}(d\xi)\]
is called the {\it measure-valued solution} of the initial boundary value
problem (\ref{eq:no0}), if for every $t\in [0,T]$
the function $(u(t),\phi(t))$ is the weak solution of the boundary value problem
 (\ref{eq:no1}) - (\ref{eq:no4}), (\ref{eq:no7}) with the given $r(t)$ and
$P(t)$, 
the initial conditions (\ref{eq:no6}) are satisfied pointwise and
the following inequality
\begin{align}
\label{def_meas_sol}
&\int_{\Om_t} g^*\left({r_t\choose P_t}\right)dsdx+\int_{\Om_t} g\left({\sigma \choose E}  -\L {r\choose P} -\F\right)dsdx\nonumber\\&\leq
\int_\Om \int_0^t\left({r_t\choose P_t},{\sigma-\na_rf(r,P) \choose E-\na_P f(r,P)}  -\L {r\choose P}\right) dsdx
\end{align}
with $\F(t,x)=\int_{S^3\times\R^3}{\na_r f\choose\na_P f}(\xi)\  \tau_{t,x}(d\xi)$ holds for a.e. $t\in(0, T)$. 
\end{df}
\begin{rem}
We note that  the integrability of the function $\Phi(s,x)=\left({r_t\choose P_t},{\sigma-\na_rf(r,P) \choose E-\na_P f(r,P)}  -\L {r\choose P}\right)$  is not required in Definition~\ref{WeakSol}.
We require the existence of  the double integral $\int_\Om \int_0^t\Phi(s,x) dsdx$, only.
\end{rem}
\begin{rem}
\label{meas_is_str}
As it is discussed in Appendix \ref{BasicsConAna}, (\ref{conv_ineq}),  if we could show 
 $\F(t,x)=\na_{(r,P)}f(r(t,x),P(t,x))$ in (\ref{def_meas_sol}), i.e. $\tau=\delta_{(r,P)}$, then the measure-valued solution $(u,\phi,r,P,\tau)$ became the strong solution. The integrability of $\Phi(s,x)$ follows then automatically from the inequality (\ref{YoungFenchelIneq}).
\end{rem}
Next, we state the main results of this work.

\begin{theo}\label{existMain_rate_dep} 
Let the functions  
$b \in W^{1,p}(0,T; L^{p}(\Om, {\R}^3))$, $q\in W^{1,p}(0,T; L^{p}(\Om, {\R}))$
and $(r^{(0)}, P^{(0)})\in L^2(\Om,{S}^3\times{\R}^3 )$ be given. Suppose that $\L=0$.
 Assume that the function $g: S^3\times\R^3\to \bar{\R}$ is convex, l.s.c. and satisfies   
growth conditions (\ref{grow_cond1}), (\ref{grow_cond2}). Let the function $f:S^3\times\R^3\to\bar{\R}$ be convex and such that $f\in C^1(\dom{f})$ and satisfies either the coercivity condition (\ref{grow_cond3''}) if $f$ depends on $P\in \R^3$ only or (\ref{grow_cond3'}) if $f$ depends on both $r\in S^3$ and $P\in\R^3$.

Then there exists a measure-valued solution $(u,\phi,r, P, \tau)$
of the  problem (\ref{eq:no0}).  Additionally,
 if $f$ satisfies (\ref{grow_cond3'}), then $\left({r_t\choose P_t},{\sigma\choose E}\right)$ is integrable over $\Om_t$ for a.e. $t\in (0,T)$.
\end{theo}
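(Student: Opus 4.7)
The plan is to follow the road map at the end of Section~\ref{form}: reduce the coupled system to an evolution inclusion in the internal variable $z=(r,P)$, discretize in time by the Rothe method, derive uniform a priori bounds, and pass to the limit with the help of Young measures. For every $z\in L^p(\Om;\S^3\times\R^3)$ and data $(b,q)$, the $L^p$-theory for the linear piezoelectric system from Section~\ref{exist_ellipt_ferro} gives a unique weak solution $(u,\phi)$ of (\ref{eq:no1})--(\ref{eq:no4}), (\ref{eq:no7}), depending continuously and affinely on $z,b,q$; write $\Sigma(z;b,q)=(\sigma,E)$ for the resulting stress/electric-field pair. With $\L=0$, the flow rule (\ref{eq:no5}) reduces to
\[
z_t\in\pa g\bigl(\Sigma(z;b,q)-\na f(z)\bigr),\qquad z(0)=(r^{(0)},P^{(0)}).
\]
On a uniform partition $0=t_0<\dots<t_N=T$ of step $h=T/N$, I would solve iteratively
\[
\tfrac{z^{n+1}-z^n}{h}\in\pa g\bigl(\Sigma(z^{n+1};b^{n+1},q^{n+1})-\na f(z^{n+1})\bigr),
\]
whose solvability at each step follows from the direct method applied after Legendre dualization with respect to $g$: the minimizing functional is the sum of the strictly convex coercive $hg^*(\tfrac{\,\cdot-z^n}{h})$ (use (\ref{grow_cond2})), the convex $f$ (use (\ref{grow_cond3''}) or (\ref{grow_cond3'})), and the reduced piezoelectric energy controlled by Section~\ref{exist_ellipt_ferro}.

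The a priori bounds arise from testing the discrete inclusion with $z^{n+1}-z^n$ and exploiting the Fenchel identity
\[
g^*\bigl(\tfrac{z^{n+1}-z^n}{h}\bigr)+g\bigl(\Sigma^{n+1}-\na f(z^{n+1})\bigr)=\bigl(\tfrac{z^{n+1}-z^n}{h},\Sigma^{n+1}-\na f(z^{n+1})\bigr),
\]
the convexity estimate $(z^{n+1}-z^n,\na f(z^{n+1}))\geq f(z^{n+1})-f(z^n)$, and the energy identity of the piezoelectric operator. Summing in $n$ and invoking (\ref{grow_cond2}) together with the coercivity of $f$, I obtain uniform bounds on the piecewise linear interpolant $z^h$ in $L^\infty(0,T;L^2(\Om))\cap W^{1,p^*}(0,T;L^{p^*}(\Om))$, improved to $L^\infty(0,T;L^p(\Om))$ under (\ref{grow_cond3'}); the piezoelectric estimates transfer these to uniform bounds on $(u^h,\phi^h,\sigma^h,E^h)$.

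Along a subsequence, $z^h\wto z$ and $z_t^h\wto z_t$ weakly, while $\{z^h\}$ generates a Young measure $\tau\in L^\infty_w(\Om_T,\Meas(\S^3\times\R^3))$ with $z(t,x)=\int\xi\,\tau_{t,x}(d\xi)$. The main obstacle is the nonlinear dependence on $z$ through $\na f(z^h)$: because $f$ is only convex (not quadratic), weak convergence of $z^h$ does not imply $\na f(z^h)\to\na f(z)$, which is precisely what forces the measure-valued framework. To derive (\ref{def_meas_sol}) from the discrete Fenchel equality, on the left I would use the Young-measure lower semicontinuity theorem for the convex l.s.c. integrand $g$ applied to $\Sigma(z^h)-\na f(z^h)$, whose weak limit is $\Sigma(z)-\F$ with $\F(t,x)=\int\na f(\xi)\,\tau_{t,x}(d\xi)$, together with the standard l.s.c. of $\int g^*(\cdot)$. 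On the right, the pairing $(z_t^h,\na f(z^h))$ telescopes via the discrete convexity inequality to $\int_\Om[f(z^h(T))-f(z^0)]dx$; taking $\liminf$ and combining with weak l.s.c. of $\int f$ and the chain rule for the $C^1$ convex function $f$ along the absolutely continuous path $z\in W^{1,p^*}$ identifies the limit as $\int_\Om\int_0^t(z_t,\na f(z))ds\,dx$, while the linear pairing $(z_t^h,\Sigma(z^h))$ converges through the self-adjoint energy identity of the piezoelectric system. Finally, under (\ref{grow_cond3'}) the improved $L^p$-bound on $z^h$ transfers through $\Sigma$ to $\sigma,E\in L^p(\Om_T)$, which paired with $z_t\in L^{p^*}(\Om_T)$ yields integrability of the inner product over $\Om_t$ for a.e. $t\in(0,T)$.
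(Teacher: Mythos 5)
Your overall strategy (reduction to an evolution inclusion via the $L^p$ piezoelectric theory, Rothe discretization, Fenchel-equality a priori bounds, Young-measure limit) is the same as the paper's, but two steps contain genuine gaps. First, the solvability of your unregularized time-discrete problem is not established. You minimize $h\,g^*((\,\cdot-z^n)/h)+f+\tfrac12\li Mv,v\re$ plus a linear data term and invoke the direct method, but with $\L=0$ the operator $M=\D(I-Q_2)$ is only positive semi-definite: its kernel is the fixed-point set of $Q_2$, which contains all pairs $(\eps(u_0),0)$ with $u_0\in W^{1,2}_0(\Om,\R^3)$. In the case covered by (\ref{grow_cond3''}), $f=f(P)$ gives no control of the $r$-component, and for $p>2$ the term $g^*$ gives only $L^{p^*}$-coercivity with $p^*<2$; hence the functional is not coercive on $L^2(\Om)$, the space on which the quadratic form $\li Mv,v\re$ is defined, its sublevel sets are not weakly compact there, and the surjectivity/direct-method argument does not apply as stated (nor is $g^*$ necessarily strictly convex, so uniqueness is also unclear). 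This is precisely why the paper replaces $M$ by $M_m=\D(I-Q_2)+\L+\frac1m I$ in (\ref{CurlPr3Dis}): the added $\frac1m I$ restores positive definiteness and $L^2$-coercivity at each step, and the extra terms are then shown to disappear in the limit via (\ref{aprioriEstim4'}) and (\ref{n_est4'}). Your scheme needs either this regularization or a separate solvability argument when $f$ satisfies only (\ref{grow_cond3''}).

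Second, the limit passage is asserted where the paper has to work hardest. Identifying the weak limit of $\na f(\bar z_m)$ with $\F(t,x)=\int\na f(\xi)\,\tau_{t,x}(d\xi)$ requires weak compactness of $\{\na f(\bar z_m)\}$ in some $L^q$; in the paper this follows from a second a priori estimate obtained by testing the discrete inclusion with the increments of $\Sigma$, see (\ref{estzmt01})--(\ref{estzmt03}), which gives $\bar\Sigma_m$ bounded in $L^\infty(0,T;L^p(\Om))$ and $(M+\L)^{1/2}z_m$ bounded in $W^{1,2}(0,T;L^2(\Om))$; the basic estimates you derive do not yield this. Likewise, the pairing $(z_{mt},\Sigma(\bar z_m))$ does not simply ``converge through the energy identity'': one only obtains a $\liminf$ inequality, by rewriting $\li M\bar z_m,z_{mt}\re$ in terms of $w_m=(\sigma_m,E_m)$ and using weak lower semicontinuity of $w\mapsto\|(\D^{-1})^{1/2}w\|_2^2$, and this again uses the $W^{1,2}(0,T;L^2)$ bound on $w_m$. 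Moreover, in the case (\ref{grow_cond3''}) the limit $z$ need not belong to $L^p(\Om_T)$, so the pairing $\li Mz,z_t\re$ is not defined directly and the paper needs the additional integration-by-parts argument in time around (\ref{n_est13}) to make sense of the right-hand side of (\ref{def_meas_sol}); your proposal treats both coercivity cases uniformly and skips this. Until these points are supplied, the derivation of (\ref{def_meas_sol}) is not complete.
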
 
If $\L$ is positive definite the following result holds.
\begin{theo}\label{existMain_rate_indep} 
Let the functions  
$b \in W^{1,p}(0,T; L^{p}(\Om, {\R}^3))$, $q\in W^{1,p}(0,T; L^{p}(\Om, {\R}))$
and $(r^{(0)}, P^{(0)})\in L^2(\Om,{S}^3\times{\R}^3 )$ be given. Assume that $\L$ is positive definite and $g: S^3\times\R^3\to \bar{\R}$ is convex, l.s.c. function, which satisfies   
either the growth conditions (\ref{grow_cond1}), (\ref{grow_cond2}) or has
 the form (\ref{rate_ind_g}).
Let the function $f:S^3\times\R^3\to\bar{\R}$ be convex and satisfy $f\in C^1(\dom{f})$. 

Then there exists a measure-valued solution $(u,\phi,r, P, \tau)$
of the problem (\ref{eq:no0}). Moreover, the function 
  $\left({r_t\choose P_t},{\sigma\choose E}-\L{r\choose P}\right)$ is integrable over $\Om_t$ for a.e. $t\in (0,T)$.
\end{theo}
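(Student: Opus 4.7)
The plan is to follow the Rothe time-discretization programme outlined in Sections~\ref{red}--\ref{Esistence}, deriving uniform a priori estimates with the help of the hardening term $\L$ and passing to the limit by means of Young measures. By the $L^p$-theory of Section~\ref{exist_ellipt_ferro}, for each $z=(r,P)\in L^{p^*}(\Om,S^3\times\R^3)$ the linear piezoelectric system (\ref{eq:no1})--(\ref{eq:no4}), (\ref{eq:no7}) has a unique weak solution $(u,\phi)$ depending linearly and continuously on $(z,b,q)$, which I write as $(\sigma,E)=Tz+B(b,q)$. Substituting into the flow rule (\ref{eq:no5}) reduces the coupled problem to a single evolution inclusion for $z$, in which the positive definiteness of $\L$ makes the principal part strongly monotone, independently of any growth assumption on $f$.

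I discretize in time by the implicit Euler scheme with step $h=T/N$: given $z^{n-1}$, I select $z^{n}$ as the unique minimizer of the convex functional
\begin{equation*}
z\mapsto h\,g^{*}\!\Bigl(\tfrac{z-z^{n-1}}{h}\Bigr)+\tfrac{1}{2}\li\L z,z\re+f(z)-\li Tz+B(b(t_{n}),q(t_{n})),z\re,
\end{equation*}
whose coercivity is supplied by $\L$ alone, so that in the rate-independent case $g^{*}=\sigma_{K}$ may have only linear growth. Testing the corresponding Euler equation with $z^{n}-z^{n-1}$ and using the Fenchel equality $g^{*}(x)+g(y)=\li x,y\re$ whenever $x\in\pa g(y)$ yields a discrete energy estimate
\begin{equation*}
\sum_{n=1}^{N}h\,g^{*}\!\Bigl(\tfrac{z^{n}-z^{n-1}}{h}\Bigr)+\tfrac{1}{2}\li\L z^{N},z^{N}\re+f(z^{N})\le \tfrac{1}{2}\li\L z^{0},z^{0}\re+f(z^{0})+\mathcal{W},
\end{equation*}
where $\mathcal{W}$ is the work of the external data, absorbable through $\L$-coercivity. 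This yields a uniform $L^{\infty}(0,T;L^{2}(\Om))$ bound on $z$ and an $L^{p^{*}}(\Om_{T})$ bound on the discrete velocities in the rate-dependent case; in the rate-independent case the analogous velocity bound comes from differencing two successive Euler equations and using monotonicity of $\pa g$ together with the strong monotonicity of $\L$.

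To pass to the limit $N\to\infty$, let $z^{(N)}$ and $\bar z^{(N)}$ denote the piecewise-affine and piecewise-constant Rothe interpolants; I extract a subsequence with $z^{(N)}\wto z$ in $W^{1,p^{*}}(0,T;L^{p^{*}}(\Om))$ and $\bar z^{(N)}\wto z$ in $L^{p^{*}}(\Om_{T})$, so that by linearity of $T$ the associated $(u^{(N)},\phi^{(N)})\wto(u,\phi)$ solve (\ref{eq:no1})--(\ref{eq:no4}), (\ref{eq:no7}) pointwise in $t$. Since weak convergence does not identify $\na f(\bar z^{(N)})$ with $\na f(z)$, I invoke the fundamental theorem on Young measures to produce $\tau\in L^{\infty}_{w}(\Om_{T},\Meas(S^{3}\times\R^{3}))$ with
\begin{equation*}
z(t,x)=\int_{S^3\times\R^3}\xi\,\tau_{t,x}(d\xi),\qquad \na f(\bar z^{(N)})\wto \F(t,x)=\int_{S^3\times\R^3}\na f(\xi)\,\tau_{t,x}(d\xi).
\end{equation*}
Lower semicontinuity of the convex integrands $g$ and $g^{*}$ transfers the discrete energy inequality into the measure-valued inequality (\ref{def_meas_sol}), and the integrability of $\bigl\langle z_{t},(\sigma,E)-\L z\bigr\rangle$ on $\Om_{t}$ follows from Fenchel's inequality together with the uniform $L^{p}$-bound on $(\sigma,E)-\L z$ inherited from the discrete estimate.

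The main obstacle is the simultaneous treatment of the rate-independent case $g=I_{K}$ and a non-coercive $f$. Because $\pa I_{K}$ is only the normal-cone mapping, $g^{*}$ alone supplies no a priori control of the discrete velocities, and $f$ supplies none either. The positive definiteness of $\L$ is indispensable: it ensures that the incremental minimum problem is uniquely solvable, it bounds $z$ in $L^{\infty}(0,T;L^{2}(\Om))$, and, via the differencing argument above, it provides the $W^{1,p^{*}}$-regularity of $z$ required by Definition~\ref{WeakSol}. A secondary technical point is to verify the hypotheses of the Young-measure theorem for $\bar z^{(N)}$ --- boundedness in $L^{p^{*}}$ and tightness of the generated family --- which follow from the $\L$-estimate, from continuity of $\na f$ on $\dom f$, and from convexity (hence weak closedness) of $\dom f$.
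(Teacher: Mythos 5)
You follow the same route as the paper (Rothe discretization, incremental convex problems made solvable by the positive definite part, a velocity estimate, Young measures, lower semicontinuity), but two of your steps do not close as stated. First, in the rate-independent case you obtain the velocity bound by ``differencing two successive Euler equations and using monotonicity of $\pa g$ together with the strong monotonicity of $\L$''. Differencing the inclusions and invoking monotonicity pairs the increment $\pa f(z^n)-\pa f(z^{n-1})$ with the increment of the \emph{velocities}, a term that has no sign and cannot be absorbed, since nothing beyond convexity and $f\in C^1(\dom f)$ is available. The paper instead tests the single $n$-th inclusion (\ref{CurlPr3Dis}) with $\frac{\Sigma^n_m-\Sigma^{n-1}_m}{h}$ and uses $I_g(\Sigma^n_m)=I_g(\Sigma^{n-1}_m)=0$ (see (\ref{estzmt04})), so that the $\pa f$-increment is paired with $z^n_m-z^{n-1}_m$ and monotonicity of $\pa I_f$ applies; together with $\L>0$ this yields (\ref{n_est1}). (A smaller slip: in your incremental functional the elastic energy must enter as $\frac12\li(\D(I-Q)+\L)z,z\re-\li\hat z,z\re$; the term $-\li Tz,z\re$ doubles the piezoelectric contribution in the Euler inclusion.)

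Second, the limit passage is thinner than what (\ref{def_meas_sol}) requires. The representation $\na f(\bar z_m)\wto\F$ via Theorem~\ref{young_exist} needs uniform integrability of $\{\na f(\bar z_m)\}$; continuity of $\na f$ on $\dom f$, weak closedness of $\dom f$ and tightness of $\{\bar z_m\}$ do not provide it. It has to be read off from the scheme, $\pa I_f(\bar z_m)=-\bar\Sigma_m-(M+\L+\frac1m I)\bar z_m+\bar{\hat z}_m$, where $\bar\Sigma_m$ is bounded because $\Sigma^n_m\in K$ with $K$ bounded (estimate (\ref{aprioriEst2})) in the rate-independent case, resp.\ by (\ref{grow_cond1}) in the rate-dependent case, while the remaining terms are bounded in $L^2(\Om_T)$ by the $\L$-estimates. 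Moreover, ``lower semicontinuity of $g$ and $g^*$'' alone does not convert the discrete energy inequality into (\ref{def_meas_sol}): the right-hand side of (\ref{def_meas_sol}) pairs $z_t$ with $\na f(z)$ evaluated at the limit function, as the iterated integral $\int_\Om\int_0^t$, so you additionally need (i) weak lower semicontinuity of $v\mapsto\int_\Om f(v)\,dx$ combined with the chain-rule argument: for a.e.\ $x$ the map $t\mapsto f(z(x,t))$ is absolutely continuous because $f\in C^1(\dom f)$ and $z(x,\cdot)\in W^{1,2}(0,T)$, hence $f(z(x,t))-f(z^0(x))=\int_0^t(\pa f(z),z_t)\,ds$ (this is (\ref{n_est16}) and it also provides the existence of the double integral required by Definition~\ref{WeakSol}); and (ii) the rewriting of $\li(M+\L)\bar z_m,z_{mt}\re_{2,\Om_t}$ as a difference of quadratic energies, passed to the limit by weak lower semicontinuity at each time $t$ as in (\ref{n_est17}). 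Finally, the stated integrability of $\bigl(z_t,(\sigma,E)-\L z\bigr)$ over $\Om_t$ follows simply from $z_t,\,(M+\L)z,\,\hat z\in L^2(\Om_T)$, not from Fenchel's inequality.
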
 
At the end of this section we  present the conditions which guarantee that a measure-valued solution of the problem (\ref{eq:no0}) is the strong one. The next remark motivates the
introduction of the measure-valued solutions.
\begin{rem}
 We note that in order to guarantee that the measure-valued solution is strong one has to
 show that
   the inequality (\ref{def_meas_sol}) is satisfied with $\F(t,x)=\na_{(r,P)}f(r(t,x),P(t,x))$. 
Indeed, if the inequality
\begin{align}
\label{dep_cond}
\int_\Om\int_0^t g\left({\sigma-\na_r f (r,P)\choose E-\na_P f(r,P)}  -\L {r\choose P}\right)dxds\leq\int_\Om \int_0^t g\left({\sigma \choose E}  -\F -\L {r\choose P}\right)dxds,
\end{align} 
holds for a function $g$ satisfying (\ref{grow_cond1}), (\ref{grow_cond2}) for a.e. $t\in (0,T)$, then the equivalence (\ref{conv_ineq}) yields that the measure-valued solution $(u,\phi,r, P, \tau)$ is strong. For the case $\L=0$ considered in Theorem~\ref{existMain_rate_dep} we just set $\L=0$ in (\ref{dep_cond}). 
For the function $g$ given by (\ref{rate_ind_g}) we need the following condition. In Theorem~\ref{existMain_rate_indep} we have proved that ${\sigma\choose E}-\L{r\choose P}-\F\in K$, what
yields that $g\left({\sigma\choose E}-\L{r\choose P}-\F\right)=0$ for the function $g$ defined by (\ref{rate_ind_g}).  
Then the verification of the following condition
\begin{align}
\label{indep_cond}
{\sigma-\na_r f (r,P)\choose E-\na_P f(r,P)}  -\L {r\choose P}\in K,\hspace{2ex} a.e. \ (t,x)\in\Om_t,
\end{align}
implies that  the second term on the left side of the inequality (\ref{def_meas_sol}) is equal to zero, and therefore it ensures that the measure-valued solution $(u,\phi,r, P, \tau)$ is strong. 
\end{rem}


\section{Existence for linear piezoelectric models}
\label{exist_ellipt_ferro}
In section \ref{red} we reduce the system of equations (\ref{eq:no0}) to a single evolution 
equation for the vector-function $(r,P)$. This equation is just the equation (\ref{eq:no5}) with
 the functions $\sigma$ and $E$ expressed through the functions $r$ and $P$. 
 In this section we establish the relation between the functions $(\sigma,E)$ and $(r,P)$ using the equations (\ref{eq:no1})-(\ref{eq:no4}) with the homogeneous Dirichlet boundary
   conditions for the functions $u$ and $\phi$. Let us 
 suppose first that $\C,\d$ and $\p$ are measurable bounded functions of $x\in\Om$. 
 For simplicity we drop the time dependence of the given and the unknown function in this section.

We use notations from section \ref{form} and rewrite the system of equations (\ref{eq:no1})-(\ref{eq:no4})  as follows 
\begin{align}
\label{eq_rew0}
\div\ \left(\begin{array}{rcr}\C\ \eps(u)&+& \p^{\rm T}\na\phi\\ -\p\ \eps(u)&+&\d\ \na\phi\end{array}\right)=\left(\begin{array}{c}-b\\ -q\end{array}\right)+\div\ \left(\begin{array}{c}\C r\\ -\p\ r-P\end{array}\right).
\end{align}
Next, we introduce the following 
notations: $U\equiv (u,\phi)^T$, $z\equiv(r,P)^T$, $B\equiv(-b,-q)^T$, 
\begin{equation}
\A\equiv\left( 
\begin{array}{cc}
\C & \p^{\rm T}\\
-\p &\d 
\end{array}
\right),\ {\rm and}\ \ 
\E\equiv\left( 
\begin{array}{cc}
\C  & 0\\
-\p &-I 
\end{array}
\right).
\end{equation}
Now we use the symmetric properties of the tensor $\C$ and rewrite the system of equations (\ref{eq:no1})-(\ref{eq:no4}), (\ref{eq:no7}) as follows
\begin{align}
\label{eq_rew1}
D_h\A_{ij}^{hk}(x)D_kU^j(x)&=B_i(x)+D_h\E_{ij}^{hk}(x)z_k^j(x),\ x\in\Om,\\
U(x)&=0,\ x\in \pa\Om.\label{ini}
\end{align}

Since the entries of the mappings $\C$, $\d$ and $\p$ are bounded measurable functions we can suppose that the same holds for the entries $\A_{ij}^{hk}(x),\ x\in \Om$ of the mapping $\A:S^3\times\R^3\to S^3\times\R^3$. And since  $\C$ and $\d$ are symmetric and positive definite uniformly with respect to $x\in\Om$  and the terms containing $\p$ cancel each other in the expression $\A_{ij}^{hk}(x)\eta_j^k\eta_i^h$ with $\eta\in S^3\times\R^3$ we obtain that there exists a constant $c_0>0$ such that the following ellipticity condition 
\begin{align}
\label{ineq_ellip}
\A_{ij}^{hk}(x)\eta_j^k\eta_i^h\geq c_0\eta_j^k\eta_j^k,
\end{align}
holds for every $\eta\in S^3\times\R^3$ uniformly with respect to $x\in\Om$. 

Next, we show that the system (\ref{eq_rew1}) - (\ref{ini}) has a unique weak solution
 $U\in W^{1,p}_0(\Om,\R^3\times\R)$ for every given $z\in L^p(\Om, S^3\times\R^3)$ and 
 $B\in W^{-1,p}(\Om, \R^3\times\R)$ for $1<p<\infty$. For this purpose we use the existence 
 results for elliptic systems of partial differential equations. We make different assumptions 
 on the entries of $\A$ for  $p=2$ and $p\neq 2$. If $p=2$, we can apply the Lax-Milgram 
 result for the bilinear form $A(U,V)=\int_{\Om}(\A_{ij}^{hk}(x)D_jU^k(x),D_hV^i(x))dx$ to 
 the problem (\ref{eq_rew1}) - (\ref{ini}). In this case it is enough to suppose 
 that (\ref{ineq_ellip})  is satisfied a.e. $x\in\Om$ and that the entries of $\A$ are 
 measurable bounded functions. 

 We prove now that for every given $z\in L^2(\Om, S^3\times\R^3)$, $B\in W^{-1,2}(\Om,\R^3\times\R)$ there exists a unique weak solution $U\in W_0^{1,2}(\Om,\R^3\times\R)$, which means that $U$ satisfies
 \begin{subequations}
 \label{e2}
 \begin{align}
 \label{e21}
 A(U,V)=l(V)
 \end{align}
 for all $V\equiv(v,\psi)\in W^{1,2}_0(\Om,\R^3\times\R)$, where 
 \begin{align}
 \label{e22}
 A(U,V)=\li\A\  DU,DV\re=\li \A (\eps(u),\na\phi)^T,(\eps(v),\na\psi)^T\re, 
 \end{align}
 and
 \begin{align}
 \label{e23}
l(V)&\equiv\li B,V\re+\li\E z, DV \re=\li b,v\re+\li q,\psi\re-\li \E z,(\eps(v),\na\psi)^T\re.
 \end{align}
 \end{subequations}
The function $A:W^{1,2}_0(\Om,\R^3\times\R)\times W^{1,2}_0(\Om,\R^3\times\R)\to\R$ 
in (\ref{e22}) is a bilinear form.  Taking $U=V$ and using the ellipticity condition (\ref{ineq_ellip}) 
together with the inequalities of Korn and Poincare we obtain that $A(U,U)\geq c_1\|U\|^2_{1,2}$. 
Since the entries of $\A$ are bounded functions
we obtain that $|A(U,V)|\leq c_2\|U\|_{1,2}\|V\|_{1,2}$ and $|l(V)|\leq c_3\|V\|_{1,2}$. Therefore, 
the assumptions of the Lax-Milgram theorem  are satisfied and there exists a unique weak 
solution $U\in W_0^{1,2}(\Om,\R^3\times\R)$ of the problem (\ref{eq_rew1}) - (\ref{ini}). 
Thus, we have proved the following existence result for $p=2$:
\begin{theo}
\label{l_2_piez_exist}
Let $\Om\subset\R^3$ be an open bounded set with $\pa\Om\in C^1$, $(r,P)\in L^2(\Om, S^3\times\R^3)$ and $(b,q)\in W^{-1,2}(\Om, \R^3\times\R)$. Let the entries of the mappings  $\C:S^3\to S^3,\ \d:\R^3\to \R^3,\  \p:S^3\to\R^3$ be bounded measurable functions. We suppose that for a.e. $x\in{\Om}$ the mappings $\C$ and $\d$ are linear,  symmetric, they are positive definite uniformly with respect to a.e. $x\in\Om$, whereas $\p:S^3\to\R^3$ is just a linear mapping for a.e. $x\in\Om$. 

Then there exists a unique function $(u,\phi)\in W_0^{1,2}(\Om, \R^3\times\R)$, which satisfies the equations (\ref{eq:no1})-(\ref{eq:no4}) with  the homogeneous Dirichlet boundary conditions for arbitrary fixed $t\in [0,T)$ such that the estimate
\begin{align}
\label{ell_est_l2}
\|u\|_{1,2}+\|\phi\|_{1,2}\leq c(\|r\|_{2}+\|P\|_{2}+\|b\|_{-1,2}+\|q\|_{-1,2})
\end{align}
holds for some constant $c>0$, which is independent of $r,P,b$ and $q$.
\end{theo}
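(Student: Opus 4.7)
The plan is to reformulate the boundary value problem \eq{eq_rew1}-\eq{ini} as the variational equation \eq{e2} and then invoke the Lax-Milgram lemma on the Hilbert space $W^{1,2}_0(\Om,\R^3\times\R)$ equipped with its standard norm $\|\cdot\|_{1,2}$. The preceding paragraph in the excerpt already sketches the main ingredients; I would carry them out as follows.

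First I would verify that the bilinear form $A$ in \eq{e22} is bounded: since the entries $\A_{ij}^{hk}$ are essentially bounded on $\Om$, the Cauchy-Schwarz inequality gives $|A(U,V)|\leq c_2\|U\|_{1,2}\|V\|_{1,2}$. Likewise, $l$ in \eq{e23} is a bounded linear functional on $W^{1,2}_0(\Om,\R^3\times\R)$ with
\begin{equation*}
|l(V)|\leq c(\|b\|_{-1,2}+\|q\|_{-1,2}+\|r\|_2+\|P\|_2)\|V\|_{1,2},
\end{equation*}
using the duality pairing for $(b,q)\in W^{-1,2}$ and the essential boundedness of the entries of $\E$ acting on $z=(r,P)\in L^2(\Om,S^3\times\R^3)$.

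The coercivity estimate $A(U,U)\ge c_1\|U\|^2_{1,2}$ is the only step that requires genuine care. Setting $V=U$ and applying the pointwise ellipticity \eq{ineq_ellip} gives $A(U,U)\ge c_0(\|\eps(u)\|^2_2+\|\na\phi\|^2_2)$. To upgrade this bound to the full $W^{1,2}$ norm I would invoke Korn's first inequality for $u\in W^{1,2}_0(\Om,\R^3)$, which yields $\|\na u\|_2\le C\|\eps(u)\|_2$ (this variant needs no regularity of $\pa\Om$, since it follows from a direct integration by parts on smooth compactly supported functions followed by density), and Poincare's inequality to control $\|u\|_2$ and $\|\phi\|_2$ by $\|\na u\|_2$ and $\|\na\phi\|_2$ respectively.

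Once the Lax-Milgram hypotheses are verified, existence and uniqueness of a weak solution $U=(u,\phi)\in W^{1,2}_0(\Om,\R^3\times\R)$ of \eq{e2}, and hence of \eq{eq:no1}-\eq{eq:no4} with the homogeneous Dirichlet boundary condition, follow immediately. For the a priori estimate \eq{ell_est_l2}, I would test the variational equation with $V=U$ and combine coercivity on the left with the bound on $l$ on the right to obtain
\begin{equation*}
c_1\|U\|^2_{1,2}\le A(U,U)=l(U)\le c(\|b\|_{-1,2}+\|q\|_{-1,2}+\|r\|_2+\|P\|_2)\|U\|_{1,2},
\end{equation*}
whence \eq{ell_est_l2} follows after dividing by $\|U\|_{1,2}$. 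I do not expect any serious obstacle; the only mildly subtle point is that \eq{ineq_ellip} controls only the symmetric part of $\na u$, so Korn's inequality must be cited explicitly rather than bounding the form below by $\|\na u\|^2_2+\|\na\phi\|^2_2$ directly.
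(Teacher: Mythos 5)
Your proposal is correct and follows essentially the same route as the paper: the system is rewritten in the divergence form (\ref{eq_rew1})--(\ref{ini}), the Lax--Milgram theorem is applied to the bilinear form (\ref{e22}) with coercivity obtained from the ellipticity condition (\ref{ineq_ellip}) together with the Korn and Poincar\'e inequalities, and the estimate (\ref{ell_est_l2}) follows by testing with $V=U$. Your explicit remark that Korn's first inequality on $W^{1,2}_0$ requires no boundary regularity is a useful clarification but does not change the argument.
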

 
For the case $p\neq 2$ we can only prove the existence of the weak solution under the assumption that the functions $\A_{ij}^{hk}(x)$ are continuous for all $x\in\overline{\Om}$. Let $2<p<\infty$. We denote $f_i(x)\equiv B_i(x)+D_h\E_{ij}^{hk}(x)z_k^j(x)$. Suppose that $f\in W^{-1,p}(\Om, \R^3\times\R)$. For every $f\in W^{-1,p}(\Om, \R^3\times\R)$ one can find $F\in L^p(\Om,S^3\times\R^3)$ such that $F$ satisfies $f_i=D_h F^i_h$ in the sense of distributions and the estimate $\|F\|_p\leq c\|f\|_{-1,p}$ holds. We suppose that $\C,\d$ and $\p$ satisfy the assumptions of Theorem \ref{l_p_piez_exist}. Then since the assumptions of Theorem \ref{l_2_piez_exist} are also satisfied, there exists a unique weak solution $U\in W^{1,2}_0(\Om,\R^3\times\R)$ of the problem (\ref{eq_rew1}), (\ref{ini}). It is easy to prove that if $\C,\d$ and $\p$ satisfy assumptions of Theorem \ref{l_p_piez_exist}, then the functions  $\A_{ij}^{hk}(x)$ are continuous for all $x\in\overline{\Om}$ and satisfy the Legendre-Hadamard condition
\begin{align}
\label{leg_had}
\A_{ij}^{hk}(x)\eta_i\eta_j\xi_h\xi_k\geq c_0|\xi|^2|\eta|^2
\end{align}
for some $c_0>0$ and for every $x\in\overline{\Om}$, $\xi\in\R^4$ and $\eta\in\R^3$ 
uniformly with respect to $x$. It is shown in \cite[p. 373]{Giu03} that the function $U$ 
belongs then to $W_0^{1,p}(\Om,\R^3\times\R)$ and the estimate 
\begin{align}
\label{est_regul}
\|U\|_{1,p,\Om}\leq c\|F\|_{p,\Om}
\end{align}
holds with some $c>0$ independent of $F$. If we suppose that $z\in L^p(\Om, S^3\times\R^3)$, $B\in W^{-1,p}(\Om,\R^3\times\R)$, then we obtain that $F\in L^p(\Om, S^3\times\R^3)$ and $\|F\|_{p,\Om}\leq c(\|z\|_{p,\Om}+\|B\|_{-1,p,\Om})$. 

To prove that the conclusion of Theorem \ref{l_2_piez_exist} holds for $1<p<2$ as well
 we use the
 following duality arguments. Let $2<p<\infty$ and $p^*:\ \frac 1p+\frac 1{p^*}=1$. In the same way
 as above we prove that for any function $f\in W^{-1,p}(\Om, \R^3\times\R)$ there is a unique
 solution $U\in W^{1,p}_0(\Om)$ of the problem $D_h\A_{ji}^{kh}D_kU^j=f_i$ with 
the operator $\A$ replaced by $\A^T$ such that the inequality $\|U\|_{1,p,\Om}\leq c\|f\|_{-1,p,\Om}$ holds. Therefore we can define the linear bounded operator $T: W^{-1,p}(\Om)\to W^{1,p}_0(\Om)$ by $Tf=U$. Then there exists a unique operator $T^*:W^{-1,p^*}(\Om)\to W^{1,p^*}_0(\Om)$, such that
\begin{align}
[Tx,y]=[x,T^*y]
\end{align} 
holds for all $x\in  W^{-1,p}(\Om)$ and $y\in  W^{-1,p^*}(\Om)$. This proves that for every $g\in W^{-1,p^*}(\Om)$ the function $V\in W^{1,p^*}_0(\Om)$ defined by  $V\equiv T^*g$ satisfies   $D_h\A_{ij}^{hk}D_kV^j=g_i$ in the weak sense. The uniqueness follows immediately. Indeed, 
we take arbitrary $g\in W^{-1,p^*}(\Om)$ and $U\in W^{1,p}_0(\Om)$. The function $f_i\equiv D_h\A_{ji}^{kh}D_kU^j$ belongs to $W^{-1,p}(\Om)$ and since $U$ is the unique weak solution of $D_h\A_{ji}^{kh}D_kU^j=f_i$ we have also $Tf=U$. We obtain
\begin{align}
[U,g]=[Tf,g]=[f,T^*g]=[f,V]=-\int_{\Om}\left( D_h U^i(x),\A_{ij}^{hk}(x)D_kV^j(x)\right)dx
\end{align}
 for every $U\in W^{1,p}_0(\Om)$. Since $U$ is chosen arbitrary we get that $V\in W^{1,p^*}_0(\Om)$ satisfies $D_h\A_{ij}^{hk}D_kV^j=g_i$ in the weak sense and it is unique. From the relation $T^*g=V$ we obtain the estimate $\|V\|_{1,p^*,\Om}\leq c\|g\|_{-1,p^*,\Om}$. And since for every $h\in L^{p^*}$ the estimate $\|\div \  h\|_{-1,p^*}\leq \|h\|_{p^*}$ is satisfied we obtain that the following existence result holds for all $1<p<\infty$:
\begin{theo}
\label{l_p_piez_exist}
Let $\Om\subset\R^3$ be an open bounded set with $\pa\Om\in C^1$, $(r,P)\in L^p(\Om, S^3\times\R^3)$ and $(b,q)\in W^{-1,p}(\Om, \R^3\times\R)$ with $1<p<\infty$. Let the entries of the mappings  $\C:S^3\to S^3,\ \d:\R^3\to \R^3,\  \p:S^3\to\R^3$ be continuous functions of $x\in\overline{\Om}$. We suppose that for every $x\in\overline{\Om}$ the mappings $\C$ and $\d$ are linear, symmetric, they are positive definite uniformly with respect to $x\in\overline\Om$, and the mapping $\p:S^3\to\R^3$ is linear. 

 Then there exists a unique weak solution $(u,\phi)\in W^{1,p}(\Om, \R^3\times\R)$ of the problem (\ref{eq:no1})-(\ref{eq:no4}) with  the homogeneous Dirichlet boundary conditions for arbitrary fixed $t\in [0,\infty)$ and the estimate
\begin{align}
\label{est_lp}
\|u\|_{1,p}+\|\phi\|_{1,p}\leq c(\|r\|_{p}+\|P\|_{p}+\|b\|_{-1,p}+\|q\|_{-1,p})
\end{align}
holds with a constant $c>0$, which is independent of $r,P,b$ and $q$.
\end{theo}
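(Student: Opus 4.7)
The plan is to treat the two remaining regimes separately: $2<p<\infty$ by invoking elliptic $L^p$-regularity for systems with continuous Legendre--Hadamard coefficients, and $1<p<2$ by duality; the case $p=2$ is already furnished by Theorem \ref{l_2_piez_exist}.

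As a preliminary step I would verify that the coefficient array $\A_{ij}^{hk}$ is continuous on $\overline{\Om}$ and satisfies the Legendre--Hadamard condition (\ref{leg_had}) uniformly in $x$. Continuity follows directly from the continuity of $\C,\d,\p$. For Legendre--Hadamard, the rank-one substitution into $\A_{ij}^{hk}\eta_i\eta_j\xi_h\xi_k$ causes the contributions of the off-diagonal blocks $\pm\p$ in
\[
\A=\begin{pmatrix}\C & \p^{T}\\ -\p & \d\end{pmatrix}
\]
to cancel in pairs, leaving a sum of quadratic forms induced by $\C$ and $\d$; uniform positive definiteness of these blocks then yields (\ref{leg_had}) with some $c_0>0$ independent of $x$.

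For $2<p<\infty$, the right-hand side $f_i=B_i+D_h\E_{ij}^{hk}z_k^j$ of (\ref{eq_rew1}) lies in $W^{-1,p}(\Om,\R^3\times\R)$, and I can choose $F\in L^p(\Om,S^3\times\R^3)$ with $f_i=D_h F^i_h$ and $\|F\|_p\le c(\|z\|_p+\|B\|_{-1,p})$. Since $L^p(\Om)\hookrightarrow L^2(\Om)$ on the bounded $\Om$, Theorem \ref{l_2_piez_exist} already provides a unique weak solution $U\in W^{1,2}_0(\Om,\R^3\times\R)$. The $L^p$ a priori estimate (\ref{est_regul}) for elliptic systems with continuous Legendre--Hadamard coefficients on $C^1$ domains, quoted from \cite[p.~373]{Giu03}, then upgrades $U$ to $W^{1,p}_0(\Om,\R^3\times\R)$ with $\|U\|_{1,p}\le c\|F\|_p$, so (\ref{est_lp}) follows.

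For $1<p<2$ I argue by duality. Applying the $p>2$ case just established to the transposed coefficients $\A^T$ yields, for every $f\in W^{-1,p}(\Om)$ with $p>2$, a unique solution $U\in W^{1,p}_0(\Om)$ of $D_h\A_{ji}^{kh}D_kU^j=f_i$ together with $\|U\|_{1,p}\le c\|f\|_{-1,p}$, defining a bounded linear operator $T:W^{-1,p}(\Om)\to W^{1,p}_0(\Om)$, $Tf=U$. Its adjoint $T^*:W^{-1,p^*}(\Om)\to W^{1,p^*}_0(\Om)$ is bounded; evaluating the pairing $[Tf,g]=[f,T^*g]$ on $f=D_h\A_{ji}^{kh}D_kU^j$ with arbitrary $U\in W^{1,p}_0$ identifies $V:=T^*g$ as the unique weak solution of $D_h\A_{ij}^{hk}D_kV^j=g_i$. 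Combined with the elementary bound $\|\div\,h\|_{-1,p^*}\le\|h\|_{p^*}$, the operator estimate on $T^*$ then gives (\ref{est_lp}) for the exponent $p^*\in(1,2)$.

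The main obstacle is ensuring that the coupled elasto-electric system genuinely falls within the Giusti $L^p$-regularity framework in the $p>2$ step, since the pointwise positivity in the hypotheses is imposed only on the individual blocks $\C$ and $\d$ rather than on the full non-symmetric matrix $\A$. The off-diagonal cancellation in the preliminary step is precisely the mechanism that rescues ellipticity and renders the regularity theorem applicable; once this is secured, the $p<2$ case is routine functional analysis.
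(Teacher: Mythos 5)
Your proposal is correct and follows essentially the same route as the paper: verifying continuity of $\A_{ij}^{hk}$ and the Legendre--Hadamard condition via the cancellation of the $\pm\p$ blocks, invoking Theorem \ref{l_2_piez_exist} together with the $L^p$-regularity result from \cite[p.~373]{Giu03} to obtain (\ref{est_regul}) for $2<p<\infty$, and handling $1<p<2$ by the identical adjoint-operator duality argument with $\A^T$ and the bound $\|\div\,h\|_{-1,p^*}\le\|h\|_{p^*}$.
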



\section{Reduction to the evolution equation}
\label{red}
In this section we show that the function $(\sigma,E)$ can be expressed conveniently
 through the function $z=(r,P)$ in such a way that after substituting $(\sigma,E)$ into the equations  (\ref{eq:no5}) and 
(\ref{eq:no6}) the problem (\ref{eq:no0}) is reduced to an evolution problem for the function $z$.

Let us suppose that the function $(r,P)$ is known and belongs to 
$L^p(\Om,S^3\times\R^3),\ 1<p<\infty$. We consider the equations (\ref{eq:no1})-(\ref{eq:no4}), (\ref{eq:no7}) and suppose that the entries of the mappings  $\C:S^3\to S^3,\ \d:\R^3\to \R^3,\  \p:S^3\to\R^3$ are continuous functions of $x\in\overline{\Om}$. 
Since the assumptions of Theorem \ref{l_p_piez_exist} are satisfied we get that for every given $z=(r,P)\in L^p(\Om,S^3\times\R^3)$ this problem has a unique solution $U=(u,\phi)\in W^{1,p}_0(\Om,S^3\times \R^3)$. 
Let us decompose $U=U_0+U_B$, where $U_0=(u_0,\phi_0)$ is a solution of 
the problem (\ref{eq:no1})-(\ref{eq:no4}) with $(b,q)=0$ and $(r,P)\neq 0$, and 
$U_B=(u_B,\phi_B)$ satisfies (\ref{eq:no1})-(\ref{eq:no4}) with $(b,q)\neq 0$ and $(r,P)= 0$. 

It follows from Theorem \ref{l_p_piez_exist} that for all $1<p<\infty$ the following estimate holds for the functions $u_0$ and $\phi_0$
\begin{align}
\label{est0}
\|u_0\|_{1,p}+\|\phi_0\|_{1,p}\leq  c(\|r\|_p+\|P\|_p).
\end{align}
Next, we define a linear operator $Q:L^p(\Om, S^3\times\R^3)\to L^p(\Om, S^3\times\R^3)$ by
\begin{align}
\label{proj_oper}
Q(r,P)^T\equiv (\eps(u_0), D_0)^T,
\end{align}
which is  bounded due to (\ref{est0}). It turns out that $Q$ is a projection operator. To this end,
we consider the functions $\tilde{\eps}_0=\eps_0-r$ and 
$\tilde{D}_0=D_0-P$ and rewrite the equations (\ref{eq:no3}), (\ref{eq:no4})
as follows
\begin{align}
\label{const_eq}
(\sigma_0,E_0)^T=\D(\tilde{\eps}_0,\tilde{D}_0)^T,
\end{align}
where $\D:S^3\times\R^3\to S^3\times\R^3$  is the operator defined by
\begin{equation}
\D=\left( 
\begin{array}{cc}
\C+\p^{\rm T}\d^{-1}\p & -\p^{\rm T}\d^{-1}\\
-\d^{-1}\p &\d^{-1} 
\end{array}
\right).
\end{equation}
It was shown in \cite{KraAlb11} that $\D$ is symmetric and positive definite.

\begin{lem} Let the vector $(r, P)^T\in L^p(\Om,S^3\times\R^3)$ be given. We define the linear mapping $Q=Q_{p}:L^p(\Om,S^3\times\R^3)\to L^p(\Om,S^3\times\R^3)$ by (\ref{proj_oper})
where $(\eps(u_0), D_0)^T$ satisfies the problem (\ref{eq:no1})-(\ref{eq:no4}) with $(b,q)=0$. Then $Q_p$ is the projection operator, which is adjoint to the operator  $Q_{p^*}$ with respect to the bilinear form $[z_1,z_2]_\D\equiv \langle \D z_1,z_2\rangle_{p,p^*}$.
\end{lem}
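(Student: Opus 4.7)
The plan is to verify the two claims separately: first that $Q_p^2 = Q_p$ via a uniqueness argument, and second that $Q_p^* = Q_{p^*}$ with respect to $[\cdot,\cdot]_\D$ via a cross-testing computation on the linear piezoelectric system.

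For the \textbf{projection property}, fix $z=(r,P)\in L^p(\Om,S^3\times\R^3)$ and let $(u_0,\phi_0)$ be the unique weak solution from Theorem~\ref{l_p_piez_exist} for the piezoelectric problem with internal data $z$ and vanishing $(b,q)$. Set $z':=Q_p z=(\eps(u_0),D_0)$. To evaluate $Q_p z'$ one must solve the same piezoelectric system with internal variables $(\eps(u_0),D_0)$ in place of $(r,P)$ and still $(b,q)=0$. The candidate $(u,\phi)=(u_0,0)$ works: substituting into (\ref{eq:no3})-(\ref{eq:no4}) yields $\sigma=0$ and $D=D_0$, so the equilibrium equation and the Gauss equation are satisfied (the latter because $\di D_0=0$ in the original problem, since $q=0$), and the homogeneous Dirichlet data hold trivially. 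Uniqueness from Theorem~\ref{l_p_piez_exist} then gives $Q_p z'=(\eps(u_0),D_0)=z'$, i.e., $Q_p^2=Q_p$.

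For \textbf{adjointness}, let $z_i=(r_i,P_i)$, $i=1,2$, with $z_1\in L^p$ and $z_2\in L^{p^*}$, and denote the corresponding solutions and derived quantities by $(u_i,\phi_i,\sigma_i,E_i,D_i)$. Setting $w_i=Qz_i$ and $\tilde w_i=w_i-z_i$, the constitutive identity (\ref{const_eq}) gives $\D\tilde w_i=(\sigma_i,E_i)$, so $\D w_i=(\sigma_i,E_i)+\D z_i$. Using the pointwise symmetry of $\D$ (from the preceding lemma in the paper), the difference $\langle\D Q_pz_1,z_2\rangle_{p,p^*}-\langle\D z_1,Q_{p^*}z_2\rangle_{p,p^*}$ collapses to $\langle(\sigma_1,E_1),z_2\rangle-\langle z_1,(\sigma_2,E_2)\rangle$. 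The adjointness thus reduces to the symmetry identity
\[\int_\Om\bigl(\sigma_1\cdot r_2+E_1\cdot P_2\bigr)\,dx=\int_\Om\bigl(\sigma_2\cdot r_1+E_2\cdot P_1\bigr)\,dx.\]

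To establish this last identity I integrate by parts in the two weak formulations using $-\di\sigma_i=0$, $\di D_i=0$, and the vanishing of $u_j,\phi_j$ on $\pa\Om$, which produces the four cross-orthogonalities
\[\int_\Om\sigma_i:\eps(u_j)\,dx=0,\qquad \int_\Om D_i\cdot E_j\,dx=0,\qquad i,j\in\{1,2\}.\]
Substituting the constitutive formulas for $\sigma_i$ and $D_i$ and rearranging should give $\int_\Om(\sigma_1\cdot r_2+E_1\cdot P_2)\,dx=-\int_\Om\C\tilde\eps_1:\tilde\eps_2\,dx-\int_\Om\d E_1\cdot E_2\,dx$ with $\tilde\eps_i=\eps(u_i)-r_i$; the pointwise symmetries of $\C$ and $\d$ then yield invariance under $1\leftrightarrow 2$. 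The main obstacle is tracking the piezoelectric $\p$-cross terms in this reduction: they must cancel precisely because of the boundary conditions together with the sign convention in the skew-like coupling of (\ref{eq:no3})-(\ref{eq:no4}), which is the very feature that made $\A$ elliptic in (\ref{ineq_ellip}).
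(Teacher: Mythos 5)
Your argument is correct, and it rests on the same ingredients as the paper's proof (uniqueness for the piezoelectric system, the identity $\D(Q_pz-z)=(\sigma,E)$, the symmetry of $\D$, and the integration-by-parts orthogonalities $\li\sigma_i,\eps(u_j)\re=\li E_i,D_j\re=0$), but the adjointness half is organized differently. The paper never pairs $(\sigma_i,E_i)$ against $z_j$ itself: it proves $[(Q_{p^*}-I)z^*,Q_pz]_\D=0$ and $[(Q_{p}-I)z,Q_{p^*}z^*]_\D=0$, i.e.\ each residual $\D(Qz-z)=(\sigma,E)$ is $\D$-orthogonal to the range of the other projection (which is exactly your cross-orthogonality statement), and then combines these two identities using the symmetry of $\D$; no constitutive substitution is needed. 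You instead reduce adjointness to the reciprocity identity $\li(\sigma_1,E_1),z_2\re=\li z_1,(\sigma_2,E_2)\re$ and prove it by substituting the constitutive law; the step you hedged does close: writing $r_2=\eps(u_2)-\tilde\eps_2$ and $P_2=D_2-\p\tilde\eps_2-\d E_2$ and using your four orthogonalities gives $\int_\Om(\sigma_1\cdot r_2+E_1\cdot P_2)\,dx=-\int_\Om(\C\tilde\eps_1:\tilde\eps_2+\d E_1\cdot E_2)\,dx$, since the term $+E_1\cdot\p\tilde\eps_2$ produced by $-\p^{\rm T}E_1$ in $\sigma_1$ cancels the term $-E_1\cdot\p\tilde\eps_2$ produced by $+\p\tilde\eps_2$ in $D_2$, and the symmetry of $\C$ and $\d$ then gives the $1\leftrightarrow 2$ invariance (your chosen exponents $z_1\in L^p$, $z_2\in L^{p^*}$ also make every pairing well defined). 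It is worth noting that your reciprocity identity follows from the orthogonalities alone, without the substitution: $\li(\sigma_1,E_1),z_2\re=\li(\sigma_1,E_1),z_2-Q_{p^*}z_2\re=-\li\D(Q_pz_1-z_1),\D^{-1}(\sigma_2,E_2)\re=-\li Q_pz_1-z_1,(\sigma_2,E_2)\re=\li z_1,(\sigma_2,E_2)\re$, which is in essence the paper's shorter route. For the projection property, your explicit candidate $(u_0,0)$ combined with uniqueness (and the observation $\di D_0=0$ from the Gauss equation with $q=0$) is the paper's terse remark that the constitutive relations contain only the differences, made precise; that part is essentially identical.
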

{\bf Proof:} 
The operator $Q_p$ maps the elements of the space $L^p(\Om,S^3\times\R^3)$ into the subspace
$\mathcal{H}=\{w=(\eps(u_0),D_0)^T:\ u_0\in W_0^{1,p}(\Om,\R^3),\ D_0\in L^p(\Om,\R^3):\ {\rm div}\ D_0=0\}$ of the space $L^p(\Om,S^3\times\R^3)$. Since (\ref{const_eq}) contains only the differences $\eps(u_0)-r,\ D_0-P$ we obtain from the uniqueness of the solution of the problem
(\ref{eq:no1})-(\ref{eq:no4}) with homogeneous Dirichlet boundary conditions that for every $w\in L^p(\Om,S^3\times\R^3)$ the operator $Q_p$ is the projection operator, namely, $Q_p^2w=Q_pw$.

Now we show that $Q_{p^*}$ is the adjoint operator to the operator $Q_p$ with respect to the bilinear form $[\cdot,\cdot]_\D$, i.e. we show that for every $z=(r,P)^T\in L^p(\Om,S^3\times\R^3)$ and $z^*=(r^*,P^*)^T\in L^{p^*}(\Om,S^3\times\R^3)$ the equality 
\begin{align}
\label{adj}
[Q_{p^*}z^*,z]_\D=[z^*,Q_pz]_\D
\end{align}
 holds. 

Let us denote the images of $Q_{p^*}z^*$ and $Q_pz$ as $Q_{p^*}z^*\equiv(\eps^*_0,D^*_0)^T$ and $Q_pz\equiv (\eps_0,D_0)^T$. We have
\begin{align*}
(\sigma_0,E_0)^T=\D((\eps_0,D_0)^T-(r,P)^T)=\D(Q_pz-z)
\end{align*}
and in the same way $(\sigma^*_0,E^*_0)^T=\D(Qz^*-z^*)$. We show now that $[(Q_{p^*}-I)z^*,Q_pz]_\D=0$ and $[(Q_{p}-I)z,Q_{p^*}z^*]_\D=0$ are satisfied, it would imply (\ref{adj}). We obtain
\begin{align*}
[(Q_{p^*}-I)z^*,Q_pz]_\D&=\li \D(Q_{p^*}z^*-z^*),Q_pz\re=\li (\sigma^*_0,E^*_0)^T,(\eps_0,D_0)^T\re=\li \sigma^*_0,\eps(u_0)\re\\&+\li -\na\phi^*_0,D_0\re=-\li \div\ \sigma^*_0,u_0\re+\li \phi^*_0,\div D_0\re=0.
\end{align*}
In the same way one can show that $[(Q_{p}-I)z,Q_{p^*}z^*]_\D=0$ holds. Then we have proved that $Q_p$ is the projection operator, which is adjoint to the operator  $Q_{p^*}$ with respect to the bilinear form $[\cdot,\cdot]_\D$.
$\Box$

Now, let us define $M:=\D(I-Q_p)$, $\hat{z}:=(\sigma_B,E_B)^T$, $z:=(r,P)^T$ and 
$z^0:=(r^0,P^0)^T$.
Inserting the expression $(\sigma_0,E_0)^T=\D(Q_p-I)(r,P)^T$ into the equation (\ref{eq:no5}) with the initial conditions (\ref{eq:no6}) yields  that equations (\ref{eq:no5}) - (\ref{eq:no6}) can be rewritten in the following abstract form 
\begin{align}
\label{ev_eq}
{z_t}&\in \pa I_g \left(-Mz-\L z-
\pa I_f (z)+\hat{z}\right),\\ 
z(0)&=z^0, \label{ini_ev_eq}
\end{align}
where $I_g,I_f:L^2(\Om,S^3\times\R^3)\to \R$ are functionals defined by (\ref{conv_int}).


\section{Existence and uniqueness for a time-discretized problem}
\label{time_discr}

We show the existence
of measure-valued solutions using the Rothe method (a time-discretization
method).  
In order to introduce a time-discretized problem, let us fix any
$m\in{\mathbb N}$ and set
$h:=\frac{T}{2^m}$. From the assumptions for the functions $b$ and $q$ we can conclude that $\hat{z}\in L^p(\Om,S^3\times\R^3)$. We set $$\hat{z}^n_m:=\frac{1}{h}\int^{nh}_{(n-1)h}\hat{z}(s)ds\in 
L^{p}( \Om, S^3\times\R^3),\ \ n=1,...,2^m.$$
Then we are looking for functions $z^n_m\in L^2(\Om,S^3\times \R^3)$ solving the following problem
\begin{align}
 \label{CurlPr3Dis} \frac{z^n_m-z^{n-1}_m}{h} & \in  
\partial I_g \big(\Sigma^n_m\big),\\ 
z_m^0&=z^0 \label{dis_ini}
\end{align}
with 
\begin{align}
\label{sigma}
\Sigma_m^n:=
-M_mz^n_m  
-\pa I_f (z^n_m)+\hat{z}_m^n\in L^2(\Om,S^3\times\R^3),
\end{align}

where 
$$M_m:=(\D(I-Q_2)+\L+\frac1mI): L^2(\Om,{S}^3\times\R^3)\to L^2(\Om,{S}^3\times\R^3).$$ 
To show that the discretized problem has a solution we need $M_m$ to be positive definite. This holds due to the term $\frac 1m I$ even if $\L$ is only positive semidefinite. Therefore we consider here  rate-dependent case with $\L=0$ and rate-independent case with $\L>0$ simultaneously and suppose that $\L$ is only positive semidefinite.
Recall that the functionals $I_f$ and $I_g$
are proper, convex and lower semi-continuous (see Section~\ref{BasicsConAna}).
We want to show that the equation
(\ref{CurlPr3Dis})  can be rewritten as
\begin{eqnarray}\label{ImportantEquiv}
\partial\Psi(z^n_m)\ni \hat z^n_m,
\end{eqnarray}
where
\[\Psi(v)=I_{g^*}\Big(\frac{v-p^{n-1}_m}{h}\Big)+\frac{1}{2}\|M^{1/2}_mv\|^{2}_2+I_f(v).\]
The functional $\Phi(v)=\frac{1}{2}\|M^{1/2}_mv\|^{2}_2:L^2(\Om,{S}^3\times\R^3)\to\bar\cR$
 is proper, convex and lower semi-continuous. Indeed, since $M_m$ is bounded and positive
definite operator, then it is maximal monotone by 
Theorem II.1.3 in \cite{Barb76}. Since the
operator $M_m$ is also self-adjoint, one has that $M_m=\partial\Phi$ by
Proposition II.2.7 in \cite{Barb76}. 
All other properties of $\Phi$ follow from its definition. The last thing
 which we
have to verify is whether the following relation
\[\partial\Psi=\partial I_{g^*}+\partial\Phi+\partial I_f\]
holds. By the definition of $\Phi$,
we conclude that the domain of $\Phi$
is equal to the whole space $L^2(\Om,{S}^3\times\R^3)$. By condition (\ref{grow_cond2})
the domain of the functional $I_{g*}$ is also the whole space $L^2(\Om,{S}^3\times\R^3)$ in the rate-dependent case. In the rate-independent case the domain of $I_{g*}$ also coincides with $L^2(\Om,{S}^3\times\R^3)$.  Therefore, condition (\ref{DomainSubdiffConvFunc}) is fulfilled and, since
all functionals are proper, convex and lower semi-continuous, 
Proposition~\ref{SumSubdiffConvFunc} gives the desired result.
With the relation (\ref{domainConvFunc})
 in hands the last observation implies that
\[\dom(\partial\Psi)=\dom(\partial I_f).\]
Since $\Phi$ is coercive in $L^2(\Om,{S}^3\times\R^3)$, 
which obviously yields the coercivity of $\Psi$, 
 the operator
$A=\partial\Psi$ is surjective by Theorem~\ref{SurjSubdifferential}.
Thus, we conclude that for every fixed $m\in\N$ and $n=1,...,2^m$ the problem (\ref{CurlPr3Dis}), (\ref{dis_ini})  has a solution $z_m^n\in L^2(\Om,S^3\times\R^3)$ for every given $\hat{z}_m^n\in L^p(\Om,S^3\times\R^3)$ and $z^0\in L^2(\Om,S^3\times\R^3)$. The solution $z_m^n$ is also unique. Indeed, suppose there are two functions $z_1$ and $z_2$, which satisfy the equation (\ref{ImportantEquiv}) for a given $\hat{z}_m^n$. We substitute the functions $z_1$ and $z_2$ into (\ref{ImportantEquiv}) and consider the difference of both equations. Then using the monotonicity of $\pa I_{g^*}$ and $\pa I_f$ we obtain that 
\begin{align*}
\li M_m(z_1-z_2), z_1-z_2\re\leq 0,
\end{align*}
which together with the positive definity of $M_m$ implies that the solutions coincide.
\vspace{1ex}\\
{\bf Rothe approximation functions:} 
For any family $\{\xi^{n}_m\}_{n=0,...,2^m}$ of functions in a reflexive Banach
space $X$, we define
{\it the piecewise affine interpolant} $\xi_m\in C([0,T],X)$ by
\begin{eqnarray}\label{RotheAffineinterpolant}
\xi_m(t):= \left(\frac{t}h-(n-1)\right)\xi^{n}_m+
\left(n-\frac{t}h\right)\xi^{n-1}_m \ \ {\rm for} \ (n-1)h\le t\le nh
\end{eqnarray}
and {\it the piecewise constant interpolant} $\bar\xi_m\in L^\infty(0,T;X)$ by
\begin{eqnarray}\label{RotheConstantinterpolant}
\bar\xi_m(t):=\xi^{n}_m\  {\rm for} \ (n-1)h< t\le nh, \ 
 n=1,...,2^m, \ {\rm and} \ \bar\xi_m(0):=\xi^{0}_m.
\end{eqnarray}
For the further analysis we recall the following property of $\bar\xi_m$
and $\xi_m$: 
\begin{eqnarray}\label{RotheEstim}
\|\xi_m\|_{L^s(0,T;X)}\le\|\bar\xi_m\|_{L^s(-h,T;X)}\le
\left(h \|\xi^{0}_m\|^s_X+\|\bar\xi_m\|^s_{L^s(0,T;X)}\right)^{1/s},
\end{eqnarray}
where $\bar\xi_m$ is formally extended to $t\le0$ by $\xi^{0}_m$ and
$1\le s\le\infty$ (see \cite{Roubi05}).\vspace{1ex}\\
\section{A-priori estimates.} 
\label{apriori}

{\bf Rate-dependent case.}
We suppose that $g$ satisfies the conditions (\ref{grow_cond1}) and (\ref{grow_cond2}).
Let us fix $m\in\N$ and $n=1,...,2^m$.  Since the problem (\ref{CurlPr3Dis}), (\ref{dis_ini}) has a unique solution, we obtain with the Young-Fenchel property (see Appendix \ref{BasicsConAna})
\begin{align*}
I_{g^*}(\frac{z_m^n-z_m^{n-1}}{h})+I_g(\Sigma_m^n)=\Li \frac{z_m^n-z_m^{n-1}}{h},\Sigma_m^n\Re,
\end{align*}
which together with the relation (\ref{sigma}) implies
\begin{align}
I_{g^*}&(\frac{z_m^n-z_m^{n-1}}{h})+I_g(\Sigma_m^n)+\frac 1h\Li z_m^n-z_m^{n-1}, M_mz_m^n\Re\nonumber\\&+\frac 1h\Li z_m^n-z_m^{n-1}, \pa I_f(z_m^n)\Re=\Li \frac{z_m^n-z_m^{n-1}}{h},\hat{z}_m^n\Re.\label{AprioriEstaHelp}
\end{align}

We note that $\li z^n_m-z^{n-1}_m, \pa \phi(z^n_m)\re\ge \phi(z^n_m)-\phi(z^{n-1}_m)$ holds
for any convex functional $\phi$. Therefore,
multiplying (\ref{AprioriEstaHelp}) by $h$ and summing the obtained 
relation for $n=1,...,l$ 
for any fixed $l\in\{1,...,2^m\}$ we derive the following inequality 
\begin{align}
& h\sum^l_{n=1}I_{g^*}\Big(\frac{z^n_m-z^{n-1}_m}h\Big)
+h\sum^l_{n=1}I_g(\Sigma_m^n)+\frac{1}2\Big(
\|(M+\L)^{1/2}z_m^l\|^2_2+\frac1m\| z^l_m\|^2_2\Big)+I_f(z^l_m)
\non\\&\le
\frac{1}2\Big(
\|(M+\L)^{1/2}z^0\|^2_2+\frac1m\| z^0\|^2_2\Big)+I_f(z^0)+h\sum^l_{n=1}\|\frac{z_m^n-z_m^{n-1}}{h}\|_{p^*}\|\hat{z}_m^n\|_p.\label{AprioriEstimHelp1}
\end{align}

 Applying the conditions (\ref{grow_cond1}) and (\ref{grow_cond2}) to the terms, which contain $I_g$ and $I_{g^*}$ and the Young inequality with $\eps<d_1$ to the last term in (\ref{AprioriEstimHelp1}) we obtain

\begin{align}
& h(d_1-\eps)\sum^l_{n=1}\|\frac{z_m^n-z_m^{n-1}}{h}\|_{p^*}^{p^*}
+hc_1\sum^l_{n=1}\|\Sigma_m^n\|_p^p+\frac{1}2\Big(
\|(M+\L)^{1/2}z_m^l\|^2_2+\frac1m\| z^l_m\|^2_2\Big)+I_f(z^l_m)
\non\\&\le
\frac{1}2\Big(
\|(M+\L)^{1/2}z^0\|^2_2+\frac1m\| z^0\|^2_2\Big)+I_f(z^0)+C_{\eps}h\sum^l_{n=1}\|\hat{z}_m^n\|_p^p+(d_2+c_2)|\Om_{T}|.\label{AprioriEstimHelp2}
\end{align}

 Now, taking Remark 8.15 in \cite{Roubi05} into account and using the definition
of Rothe's approximation functions we 
rewrite (\ref{AprioriEstimHelp2}) as follows
\begin{align}
& (d_1-\eps)\|\pa_t z_m\|_{p^*,\Om_T}^{p^*}
+c_1\|\bar{\Sigma}_m\|_{p,\Om_T}^{p}+\frac{1}2\Big(
\|(M+\L)^{1/2}\bar z_m(t)\|^2_2+\frac1m\| \bar z_m(t)\|^2_2\Big)\non\\&+I_f( z_m(t))
\le
\frac{1}2\Big(
\|M^{1/2}z^0\|^2_2+\frac1m\| z^0\|^2_2\Big)+I_f(z^0)+(d_2+c_2)|\Om_{T}|+C_{\eps}\|\bar{\hat{z}}_m\|_{p,\Om_T}^{p}.\label{AprioriEstimHelp3}
\end{align}
Since $\bar{\hat{z}}_m\to \hat{z}$ in $L^{p}(\Om_T)$, the last term in (\ref{AprioriEstimHelp3}) is bounded by a constant. The estimate (\ref{AprioriEstimHelp3}) implies that
\begin{align}
&\{z_m\}_m \ {\rm is}\  {\rm uniformly}\  {\rm bounded}\  {\rm in}
 \ W^{1,{p^*}}(0,{T};L^{p^*}(\Om,{S}^3\times{\R}^3)),
\label{aprioriEstim1}\\[1ex]
&\left\{\bar\Sigma_{m}\right\}_m\ {\rm is}\  {\rm uniformly}\  {\rm bounded}\  {\rm in}
 \ L^p(\Om_T,{S}^3\times{\R}^3),\label{aprioriEstim2}\\
&\{(M+\L)^{1/2} z_m\}_m \ {\rm is}\  {\rm uniformly}\  {\rm bounded}\  {\rm in}
 \ L^{\infty}(0,{T};L^{2}(\Om,{S}^3\times{\R}^3)),\label{aprioriEstim3}\\[1ex] 
& \left\{\frac1{\sqrt{m}} z_m\right\}_m \ {\rm is}\ 
  {\rm uniformly}\  {\rm bounded}\  {\rm in}
 \ L^{\infty}(0,{T};L^{2}(\Om,{S}^3\times{\R}^3)),\label{aprioriEstim4}\\[1ex]
&\{f(z_m)\}_m\ {\rm and}\ \{f(\bar{z}_m)\}_m\ {\rm are}\  {\rm uniformly}\  {\rm bounded}\  {\rm in}
 \ L^\infty(0,{T};L^{1}(\Om,{\R})). \label{aprioriEstim5}
\end{align}

We can improve the estimates (\ref{aprioriEstim2})-(\ref{aprioriEstim4}) and
show that $\left\{\bar\Sigma_{m}\right\}_m$ is uniformly bounded in the space $L^\infty(0,{T};L^{p}(\Om,{S}^3\times{\R}^3))$ and the sequences $\{(M+\L)^{1/2}\bar z_m\}_m$ and $\{\frac 1{\sqrt{m}}\bar z_m\}_m$ are uniformly bounded in $W^{1,2}(0,{T};L^{2}(\Om,{S}^3\times{\R}^3))$. Indeed, multiplying (\ref{CurlPr3Dis}) by the term $\frac{\Sigma_m^n-\Sigma_m^{n-1}}{h}$ and integrating over $\Om$ we obtain
\begin{align}
\label{estzmt01}
\Li \frac{z_m^n-z_m^{n-1}}{h}, \frac{\Sigma_m^n-\Sigma_m^{n-1}}{h}\Re=\frac 1h\Li \pa I_g(\Sigma_m^n),\Sigma_m^n-\Sigma_m^{n-1}\Re\geq \frac 1h (I_g(\Sigma_m^n)-I_g(\Sigma_m^{n-1})).
\end{align}
Then we use that $\pa I_f$ is a monotone operator and estimate the left side of (\ref{estzmt01}) from above
\begin{align}
\label{estzmt02}
&\Li \frac{z_m^n-z_m^{n-1}}{h}, \frac{\Sigma_m^n-\Sigma_m^{n-1}}{h}\Re=-\Li \frac{z_m^n-z_m^{n-1}}{h}, (M+\L+\frac 1m) \frac{z_m^n-z_m^{n-1}}{h}\Re\non\\ 
-&\Li \frac{z_m^n-z_m^{n-1}}{h}, \frac{\pa I_f(z_m^n)-\pa I_f(z_m^{n-1})}{h}\Re+\Li \frac{z_m^n-z_m^{n-1}}{h}, \frac{\hat{z}_m^n-\hat{z}_m^{n-1}}{h}\Re\non\\
\leq&-\|(M+\L)^{1/2} \frac{z_m^n-z_m^{n-1}}{h}\|^2_2-\|\frac 1{\sqrt{m}} \frac{z_m^n-z_m^{n-1}}{h}\|^2_2+\Li \frac{z_m^n-z_m^{n-1}}{h}, \frac{\hat{z}_m^n-\hat{z}_m^{n-1}}{h}\Re.
\end{align}
Now we combine (\ref{estzmt01}) and (\ref{estzmt02}), multiply  the obtained 
relation by $h$ and sum it up for $n=1,...,l$ 
and any fixed $l\in\{1,...,2^m\}$. We obtain 
\begin{align}
\label{estzmt02'}
h&\sum_{n=1}^l (\|(M+\L)^{1/2} \frac{z_m^n-z_m^{n-1}}{h}\|^2_2+\|\frac 1{\sqrt{m}} \frac{z_m^n-z_m^{n-1}}{h}\|^2_2)+\int_{\Om}g(\Sigma_m^l) dx\non\\
&\leq I_g(\Sigma(0))+h\sum_{n=1}^l \Li \frac{z_m^n-z_m^{n-1}}{h}, \frac{\hat{z}_m^n-\hat{z}_m^{n-1}}{h}\Re,
\end{align}
 which implies the estimate
 \begin{align}
 \label{estzmt03}
\|(M+\L)^{1/2} z_{mt}\|^2_{2,\Om_{T}}+\|\frac 1{\sqrt{m}} z_{mt}\|^2_{2,\Om_{T}}+\|\bar{\Sigma}_m(t)\|_{p,\Om}\leq I_g(\Sigma(0))+\|z_{mt}\|_{p^*,\Om_{T}} \|\hat{z}_{mt}\|_{p,\Om_{T}}. 
 \end{align}
 Since by (\ref{aprioriEstim1}) the rigth side of (\ref{estzmt03}) is bounded we obtain
\begin{align}
&\left\{\bar\Sigma_{m}\right\}_m\ {\rm is}\  {\rm uniformly}\  {\rm bounded}\  {\rm in}
 \ L^\infty(0,{T};L^{p}(\Om,{S}^3\times{\R}^3)),\label{aprioriEstim2'}\\
&\{(M+\L)^{1/2} z_m\}_m \ {\rm is}\  {\rm uniformly}\  {\rm bounded}\  {\rm in}
 \ W^{1,2}(0,{T};L^{2}(\Om,{S}^3\times{\R}^3)),\label{aprioriEstim3'}\\[1ex] 
& \left\{\frac1{\sqrt{m}} z_m\right\}_m \ {\rm is}\ 
  {\rm uniformly}\  {\rm bounded}\  {\rm in}
 \ W^{1,2}(0,{T};L^{2}(\Om,{S}^3\times{\R}^3)).\label{aprioriEstim4'}
 \end{align}
 
 In conclusion we note that if the function $f$ depends only on $P$ and satisfies the coercivity condition (\ref{grow_cond3''}),  we would obtain from (\ref{aprioriEstim5})
 \begin{align}
&\{P_m\}_m\ {\rm and}\ \{\bar{P}_m\}_m\ {\rm are}\  {\rm uniformly}\  {\rm bounded}\  {\rm in}
 \ L^\infty(0,{T};L^{2}(\Om,{S}^3\times{\R}^3))
\label{aprioriEstim6}
\end{align}
and if $f$ depends on both $r$ and $P$ and satisfies the coercivity condition (\ref{grow_cond3'}),  we would get then from (\ref{aprioriEstim5})
\begin{align}
&\{z_m\}_m\ {\rm and}\ \{\bar{z}_m\}_m\ {\rm are}\  {\rm uniformly}\  {\rm bounded}\  {\rm in}
 \ L^\infty(0,{T};L^{p}(\Om,{S}^3\times{\R}^3)),
\label{aprioriEstim7}. 
\end{align}
Here we emphasize that the estimates of this paragraph hold for positive 
semi-definite operator $\L$.

Suppose now that $\L$ is positive definite. Then the estimate (\ref{aprioriEstim3'}) immediately implies that
\begin{align}
\{z_m\}_m \ {\rm is}\  {\rm uniformly}\  {\rm bounded}\  {\rm in}
 \ W^{1,2}(0,{T};L^{2}(\Om,{S}^3\times{\R}^3))\label{aprioriEstim3''}
 \end{align}
 without coercivity assumptions for the function $f$.
 
{\bf Rate-independent case.} Now we suppose that $g$ is defined by (\ref{rate_ind_g}) and $\L$ is positive definite.
The proof runs the same lines of the second part of the previous paragraph except some 
slight changes. We multiply (\ref{CurlPr3Dis}) again by the term $\frac{\Sigma_m^n-\Sigma_m^{n-1}}{h}$, integrate over $\Om$  and use that $ I_g(\Sigma_m^n)=0,\ m,n\in\N$ to obtain
\begin{align}
\label{estzmt04}
\Li \frac{z_m^n-z_m^{n-1}}{h}, \frac{\Sigma_m^n-\Sigma_m^{n-1}}{h}\Re=\frac 1h\Li \pa I_g(\Sigma_m^n),\Sigma_m^n-\Sigma_m^{n-1}\Re\geq \frac 1h (I_g(\Sigma_m^n)-I_g(\Sigma_m^{n-1}))=0.
\end{align}
It follows from (\ref{estzmt04}) that
\begin{align}
\Li \left(M+\L+\frac 1m\right)\frac{z_m^n-z_m^{n-1}}{h}, \frac{z_m^n-z_m^{n-1}}{h}\Re&+\frac 1h\Li \pa I_f(z_m^n)-\pa I_f(z_m^{n-1}), z_m^n-z_m^{n-1}\Re\nonumber\\&\leq\Li \frac{z_m^n-z_m^{n-1}}{h}, \frac{\hat{z}_m^n-\hat{z}_m^{n-1}}{h}\Re.\label{estzmt05}
\end{align}
Since $M$ is positive semidefinite, $\pa I_f$ is monotone and $\L$ is positive definite we obtain the estimate
\begin{align*}
\left\|\frac{z_m^n-z_m^{n-1}}{h}\right\|_{2,\Om}&\leq 
C\left\|\frac{\hat{z}_m^n-\hat{z}_m^{n-1}}{h}\right\|_{2,\Om},
\end{align*}
which after multiplying by $h$ and summing for $n=1,...,l$ implies
\begin{align}
\label{n_est1}
\|z_{mt}\|_{2,\Om_{T}}&\leq C\|\hat{z}_{mt}\|_{2,\Om_{T}}.
\end{align}
Since the equation (\ref{AprioriEstaHelp}) holds also rate-independent $g$, we use it with the same arguments as in the previous paragraph to obtain the following estimate
\begin{align}
\label{n_est2}
&\int_0^t I_{g*}(z_{mt})dt+\int_0^t I_g(\bar{\Sigma}_m)dt+I_f(z_m(t))\nonumber\\&\leq \frac 12 (\|(M+\L)^{1/2}z^0\|_2^2+\frac1m\|z^0\|_2^2)+I_f(z^0)+\|z_{mt}\|_{2,\Om_{T}}\|\bar{\hat{z}}_m\|_{2,\Om_{T}}.
\end{align}
Since the sequence $\{\hat{z}_m\}_m$ converges strongly in $W^{1,p}(0,T;L^p(\Om))$ and the domain of $I_g$ is the bounded convex set $K$ we obtain in the rate independent case that the estimates
(\ref{n_est1}) and  (\ref{n_est2}) imply 
\begin{align}
&\{z_m\}_m \ {\rm is}\  {\rm uniformly}\  {\rm bounded}\  {\rm in}
 \ W^{1,{2}}(0,{T};L^{2}(\Om,{S}^3\times{\R}^3)),
\label{aprioriEst1}\\[1ex]
&\left\{\bar\Sigma_{m}\right\}_m\ {\rm is}\  {\rm uniformly}\  {\rm bounded}\  {\rm in}
 \ L^\infty(0,{T};L^{\infty}(\Om,{S}^3\times{\R}^3)),\label{aprioriEst2}\\
&\{f(z_m)\}_m\ {\rm is}\  {\rm uniformly}\  {\rm bounded}\  {\rm in}
 \ L^\infty(0,{T};L^{1}(\Om,{\R})). \label{aprioriEst5}
\end{align}

\section{Existence of measure-valued solutions}
\label{Esistence}

Based on the results of the previous sections we are able now to prove the main existence results
of this work, Theorem~\ref{existMain_rate_dep} and Theorem~\ref{existMain_rate_indep}.
\paragraph{The proof of Theorem~\ref{existMain_rate_dep}.} 
\begin{proof} 
In a similar way as in the beginning of section \ref{apriori} we use the equality (\ref{AprioriEstaHelp}) to derive the following inequality 
\begin{align}
\label{n_est3}
&\int_0^{t}\int_{\Om} g^*(z_{mt}(x,s))dsdx+\int_0^{t}\int_{\Om}g(\bar{\Sigma}_m(x,s))dsdx+ \li M \bar{z}_m,z_{mt}\re_{2,\Om_t}\non\\&+\int_{\Om}f(z_m(x,t))dx-\int_{\Om}f(z^0(x))dx 
\leq\frac1{2m}\| z^0\|^2_2+\li z_{mt},\bar{\hat{z}}_m\re_{2,\Om_t}.
\end{align}
 Using a-priori estimates from section \ref{apriori} we want to pass to the limit in the inequality (\ref{n_est3}).

First, we obtain by  
(\ref{aprioriEstim1}) that, at the expense of extracting a subsequence,  the sequence
$\{z_m\}_m$ converges weakly in the space  $ W^{1,p^*}(0,T;L^{p^*}(\Om))$ to some $z$. Next we claim that the sequence $\{\bar z_m\}_m$ converges weakly  in $L^{p^*}(\Om_T)$ and 
the weak limits of $\{\bar z_m\}_m$ and $\{z_m\}_m$ coincide. 
Indeed, using (\ref{aprioriEstim1}) this can be shown as follows
\begin{align}
&\|z_m-\bar z_m\|^{p^*}_{{p^*},\Om_{T}}=\sum_{n=1}^{2^m}
\int_{(n-1)h}^{nh}\left\|(z^{n}_m-z^{n-1}_m)\frac{t-nh}{h}\right\|^{p^*}_{p^*}dt\non\\
&=\frac{h^{{p^*}+1}}{{p^*}+1}\sum_{n=1}^{2^m}
\left\|\frac{z^{n}_m-z^{n-1}_m}{h}\right\|^{p^*}_{p^*}=\frac{h^{p^*}}{{p^*}+1}
\left\|\frac{dz_m}{dt}\right\|^{p^*}_{{p^*},\Om_{T}},\label{n_est4}
\end{align}
which implies that  $\bar z_m-z_m$ converges strongly to 0  in 
$ L^{p^*}(\Om_{T})$. Then  the sequence $\{\bar z_m\}_m$ converges weakly  in $L^{p^*}(\Om_T)$ to the same weak limit $z\in W^{1,p^*}(0,T;L^{p^*}(\Om))$ as $\{z_m\}_m$.  

In the same way,  using (\ref{aprioriEstim3'}) and (\ref{aprioriEstim4'}), we obtain that
\begin{align}
\label{n_est4'}
M(\bar{z}_m-z_m)\to 0\ {\rm and}\ \frac 1m(\bar{z}_m-z_m)\to 0\ {\rm in}\ L^2(\Om_T).
\end{align}

Since the elements of $\{\bar{\Sigma}_m\}_{m\in\N}$ belong to the space $L^p(\Om_{T})$, we can use the definition (\ref{conv_int}) to define the convex functionals $\hat{I}_g:L^p(\Om_t)\to\bar{\R}$ and $\hat{I}_{g^*}:L^{p^*}(\Om_t)\to\bar{\R}$ by $\hat{I}_{g^*}(z)\equiv\int_0^{t}\int_{\Om}g^*(z(s,x))dsdx$ and 
$\hat{I}_g(z)\equiv\int_0^{t}\int_{\Om}g(z(s,x))dsdx$, respectively. 
Then the functionals satisfy $\hat{I}_{g^*}=(\hat{I}_g)^*$. Since $z_{mt}$ converges weakly to
 $z_t$ in $L^{p^*}(\Om_T)$ we obtain 
\begin{align}
\label{n_est5}
\liminf_{m\to\infty}\hat{I}_{g^*}(z_{mt})\geq \hat{I}_{g^*}(z_{t}). 
\end{align}

Due to (\ref{aprioriEstim2}) there exists a subsequence of $\{\bar{\Sigma}_m\}_m$, which converges weakly to $\bar{\Sigma}$ in $L^p(\Om_T)$. Therefore we get
\begin{align*}
\liminf_{m\to\infty}\hat{I}_{g}(\bar{\Sigma}_m)\geq \hat{I}_{g}(\bar{\Sigma}). 
\end{align*}
Our next goal is to compute the function $\bar{\Sigma}$. To this end, we consider the sequence
 $\bar{\Sigma}_m=-M\bar{z}_m+\frac 1m \bar{z}_m-\pa f(\bar{z}_m)+\bar{\hat{z}}_m,\ m\in\N$. 

By constructions, $\bar{\hat{z}}_m$ converges strongly to $\hat{z}$ in $L^p(\Om_T)$. 
It follows from (\ref{aprioriEstim4'}) that the sequence $\{\frac{1}{\sqrt{m}}z_m\}_m$ is uniformly bounded in $L^2(\Om_T)$. By (\ref{n_est4'}) the sequence $\{\frac{1}{\sqrt{m}}\bar{z}_m\}_m$ is also uniformly bounded in $L^2(\Om_T)$ 
and therefore $\frac 1m\bar{z}_m $ converges strongly to $0$ in $L^2(\Om_T)$ . 

Moreover, it follows from (\ref{aprioriEstim3'}) and (\ref{n_est4'}) that the sequence $\{M\bar{z}_m\}_m$ converges weakly in $L^2(\Om_T)$ and the weak limit coincides with the weak limit of the sequence $\{M{z}_m\}_m=\{{\sigma_m\choose E_m}\}_m$. By Theorem \ref{l_p_piez_exist} 
we get that for every $z_m\in W^{1,2}(0,T;L^{2}(\Om))$ there exists a solution $(\sigma_m,E_m)\in W^{1,2}(0,T;L^2(\Om;S^3\times \R^3))$ of the problem 
\begin{align}
-{\rm div}\ \sigma_m&=0,\ \ \ {\rm div}\ D_m=0\label{equ1},\\
  {\sigma_m\choose E(\nabla \phi_m)}&=\D\left({\eps(\nabla u_m)\choose D_m}-z_m\right)=-\D(I-Q_2)z_m=-Mz_m\label{equ2}
  \end{align}
  with homogeneous boundary conditions for the functions $(u_m,\phi_m)$. 
  
  Now, let us  define the operator $M_{p^*}=\D(I-Q_{p^*}): L^{p^*}(\Om)\to L^{p^*}(\Om)$. 
 Due to the uniqueness of the solution of the problem (\ref{equ1}) - (\ref{equ2}),  instead 
 of the operator $M$ we can 
 consider the operator $M_{p^*}$  in (\ref{equ2}).
 We note that the operator $M_{p^*}$ is the extension of the operator $M$ on $L^2(\Om)$. Thus,
 because of the linearity of the operator $M_{p^*}$, the sequence $\{{\sigma_m\choose E_m}\}_m$ converges weakly in the space $W^{1,2}(0,T;L^2(\Om))$ to $M_{p^*}z={\sigma\choose E}$, which is the solution of the problem (\ref{equ1}), (\ref{equ2}) corresponding to the function $z$.   
Since the sequence $\{-M\bar{z}_m+\frac 1m \bar{z}_m+\bar{\hat{z}}_m\}_m$ converges weakly in $L^2(\Om_T)$ and $\{\bar{\Sigma}_m\}_m$
converges weakly in $L^p(\Om_T)$, the sequence $\{\pa I_f(\bar{z}_m)\}_m$ converges weakly in $L^2(\Om_T)$ to some $\F\in L^2(\Om_T)$. By Theorem \ref{young_exist} there exists a Young measure $\tau\in L_w(\Om_T; \Meas(S^3\times\R^3))$ associated with the sequence $\{z_m\}_m$ such that 
$\F(t,x)=\int_{S^3\times\R^3}\pa f(\xi)\tau_{t,x}(d\xi)\ {\rm for}\ {\rm a.e.}\ (t,x)\in \Om_T.$ Thus, we
get that $\bar{\Sigma}_m$ converges weakly to
 $-M_{p^*}z-\F+\hat{z}\ {\rm in}\ L^p(\Om_T)$ and that the inequality
holds
\begin{align}
\label{n_est6}
\liminf_{m\to\infty}\hat{I}_{g}(\bar{\Sigma}_m)\geq \hat{I}_{g}(-M_{p^*}z-\F+\hat{z}). 
\end{align}
Since $J(z)=\int_{\Om}f(z(x))dx$ is a convex functional on $L^{p^*}(\Om)$ and
 $z_m(t)$ converges weakly to
 $ z(t)$ in $L^{p^*}(\Om)$, we get that
  $\liminf_{m\to\infty}\int_{\Om}f(z_m(x,t))dx\geq \int_{\Om}f(z(x,t))dx$. 
Next,  for a.e. $x\in\Om$ we have that $z(x,\cdot)\in W^{1,p^*}(0,T;S^3\times\R^3)$. Let us fix $x\in\Om$. From (\ref{aprioriEstim5}) we conclude that the set $f(z(x,[0,T]))\subset \dom f$. Since $f\in C^1(\dom f)$ and $z(x,[0,T])$ is a compact subset of $\dom f$, we obtain that $f(z(\cdot)):[0,T]\to\R$ is an absolute continuous function and therefore $f(z(t))$ is almost everywhere strongly differentiable with $f(z(t))-f(z^0)=\int_0^t(\pa f(z(\tau),z_t(\tau))d\tau$  for a.e. $t\in (0,T)$. It follows from (\ref{aprioriEstim5}) that $\int_{\Om}(f(z(x,t))-f(z^0(x))dx<\infty$ and hence $\int_{\Om}\int_0^t(\pa f(z(x,\tau),z_t(x,\tau))d\tau dx<\infty$. Therefore we get\footnote{The existence of the integral 
  $\int_0^t \int_{\Om}(\pa f(z(x,\tau),z_t(x,\tau)) dxd\tau$ is an open problem.}
\begin{align}
\label{n_est9}
\liminf_{m\to\infty}\int_{\Om}(f(z_m(x,t))-f(z^0(x))dx\geq \int_{\Om}\int_0^t(\pa f(z(x,\tau),z_t(x,\tau))d\tau dx.
\end{align} 
Now, we estimate $\liminf_{m\to\infty}\li M \bar{z}_m,z_{mt}\re_{2,\Om_t}$ from below. 
To this end, we note first that  (\ref{n_est4'}) implies
 that $\liminf_{m\to\infty}\li M \bar{z}_m,z_{mt}\re_{2,\Om_t}=\liminf_{m\to\infty}\li M {z}_m,z_{mt}\re_{2,\Om_t}$. 
 The equations (\ref{equ1}) - (\ref{equ2}) yield that
\begin{align}
\label{n_est6'}
&\li Mz_m,z_{mt}\re_{2,\Om_t}=\Li -{\sigma_m\choose E_m}, z_{mt}\Re_{2,\Om_t}=\Li -{\sigma_m\choose E_m}, -\D^{-1} {\sigma_{mt}\choose E_{mt}}+{\eps(\nabla u_{mt})\choose D_{mt}}\Re_{2,\Om_t}
\non\\&=\Li {\sigma_m\choose E_m}, \D^{-1} {\sigma_{mt}\choose E_{mt}}\Re_{2,\Om_t}.
\end{align} 
We introduce the functions $w_m={\sigma_m\choose E_m}$. Then in terms of $w_m$  the last term can be rewritten as follows
\begin{align*}
\li \D^{-1}w_m,w_{mt}\re_{2,\Om_t}=\|(\D^{-1})^{\frac 12}w_m(t)\|_2^2-\|(\D^{-1})^{\frac 12}w(0)\|_2^2,
\end{align*}
where $w(0)=w_m(0)$ does not depend on $m$ since $z_m(0)=z^0$. The relation $\{w_m\}_m = \{Mz_m\}_m$ implies that $w_m\wto w$ in $W^{1,2}(0,T;L^2(\Om))$ and since $\D^{-1}$ is a positive definite symmetric operator, we obtain that $(\D^{-1})^\frac 12w_m(t)\wto (\D^{-1})^\frac 12w(t)$ in $L^2(\Om)$. Since $w\mapsto \|(\D^{-1})^{\frac 12}w\|_2^2$ is a proper, convex and l.s.c. functional on $L^2(\Om)$, using Lemma \ref{chainrule} we obtain
\begin{align}
\label{n_est7}
\liminf_{m\to\infty}\li \D^{-1}w_m,w_{mt}\re_{2,\Om_t}&= \liminf_{m\to\infty}(\|(\D^{-1})^{\frac 12}w_m(t)\|_2^2-\|(\D^{-1})^{\frac 12}w(0)\|_2^2)\non\\&\geq \|(\D^{-1})^{\frac 12}w(t)\|_2^2-\|(\D^{-1})^{\frac 12}w(0)\|_2^2=\li \D^{-1}w,w_t\re_{2,\Om_t}.
\end{align}
Let $(u,\phi,\sigma,D)$ be a solution of the problem (\ref{equ1}), (\ref{equ2}) with homogeneous boundary conditions corresponding to $z$. 
And let us assume first that $f$ satisfies the coercivity condition (\ref{grow_cond3'}). 
Then we have that $z_m\wto z$ in $L^p(\Om_T)$. Combining (\ref{n_est6'}), (\ref{n_est7}) we obtain
\begin{align}
\label{n_est7'}
&\liminf_{m\to\infty}\li Mz_m,z_{mt}\re_{2,\Om_t}\geq \li \D^{-1}w,w_t\re_{2,\Om_t}=\li w,\D^{-1}w_t\re_{2,\Om_t}\non\\&=\int_0^t\int_{\Om}\left({\sigma\choose E(\na\phi)},{\eps(\na u_t)\choose D_t}\right)dxds+\li Mz,z_t\re_{p,p^*,\Om_t}
\end{align}
Since $z\in L^{\infty}(0,T;L^p(\Om))$, then also ${\sigma\choose E(\na \phi)}\in L^{\infty}(0,T;L^p(\Om))$ and we obtain that the first term on the right side of (\ref{n_est7'}) $\li {\sigma\choose E(\na\phi)},{\eps(\nabla u_{t})\choose D_{t}}\re_{p,p^*,\Om_T}=0$.
Therefore
\begin{align}
\label{n_est7''}
&\liminf_{m\to\infty}\li Mz_m,z_{mt}\re_{2,\Om_t}\geq\li Mz,z_{t}\re_{p,p^*,\Om_t}.
\end{align}

Altogether we obtain
\begin{align}
\label{n_est12}
\hat{I}_{g^*}(z_{t})+\hat{I}_g(-M_{p}z-\F+\hat{z})\leq \li z_{t},-M_{p} z 
+{\hat{z}}\re_{p,p^*,\Om_t}-\int_{\Om}\int_0^t(z_t,\pa f(z)) dtdx.
\end{align}

Suppose now that $f(r,P)=f(P)$ and satisfies the coercivity condition (\ref{grow_cond3''}). The coercivity condition implies that $P\in L^{\infty}(0,T;L^2(\Om))$. Since $\pa f(z)={0\choose \pa f(P)}$ and the sequence $(\Sigma_m)_m$ is uniformly bounded in $L^p(\Om_T)$, we obtain that $\sigma\in L^p(\Om_T)$ and $\li\sigma,\eps_t \re_{\Om_t}=0$. Now since $E,D-P\in W^{1,2}(0,T;L^2(\Om))$ we integrate by parts the term $\li E,D_t-P_t\re_{\Om_t}$ with respect to $t$ to obtain
\begin{align*}
\li E,D_t-P_t\re_{\Om_t}=\li E,D-P\re_{\Om}|_0^t-\li E_t,D-P\re_{\Om_t}.
\end{align*}
Since $P\in L^2(\Om)$ and $D-P\in L^2(\Om)$ for every $t\in [0,T]$, then also $D\in L^2(\Om)$. Therefore we obtain for every $t\in [0,T]$ that $\li E,D\re_\Om =0$. And since $E_t=E(\nabla \phi_t)\in L^2(\Om)$, we obtain for a.e. $t\in [0,T]$ that also $\li E_t,D\re_\Om=0$. Thus we obtain that 
\begin{align}
\label{n_est13}
\li E,D_t-P_t\re_{\Om_t}=-\li E,P\re_{\Om}|_0^t+\li E_t,P\re_{\Om_t}.
\end{align}
 Now we show that the function $x\to \int_0^t(E,P_t)\in L^1(\Om)$. Indeed, we have that for every fixed $x\in\Om$ the function $(E,P)$ is absolutely continuous and 
\begin{align*}
(E,P)|_0^t=\int_0^t\frac{d}{dt}(E,P)ds=\int_0^t(E_t,P)+(E,P_t)ds. 
\end{align*}
It follows from the estimate (\ref{n_est13}) that $\int_\Om\int_0^t(E,P_t)dsdx$ exists. Therefore we get that the equality $\int_{\Om}\int_0^t(E,D_t-P_t)dsdx=-\int_{\Om}\int_0^t(E,P_t)dsdx$ is satisfied and with $m\to \infty$ the estimate (\ref{n_est3}) takes the form
\begin{align}
\label{n_est13''}
\hat{I}_{g^*}(z_{t})+\hat{I}_g(-M_{p^*}z-\F+\hat{z})\leq \li z_{t},{\hat{z}}\re_{p,p^*,\Om_t}-\int_{\Om}\int_0^t(z_t,M_{p^*}z+\pa f(z)) dtdx.
\end{align}

This completes the proof
of Theorem~\ref{existMain_rate_dep}.
\end{proof}

Now we prove the existence result for the case when $\L$ is a positive definite operator.

\paragraph{The proof of Theorem~\ref{existMain_rate_indep}.} 
\begin{proof} 
We start again from the inequality
\begin{align}
\label{n_est13'}
&\int_0^{t}\int_{\Om} g^*(z_{mt}(x,s))dsdx+\int_0^{t}\int_{\Om}g(\bar{\Sigma}_m(x,s))dsdx+ \li (M+\L) \bar{z}_m,z_{mt}\re_{2,\Om_t}\non\\&+\int_{\Om}f(z_m(x,t))dx-\int_{\Om}f(z^0(x))dx 
\leq\frac1{2m}\| z^0\|^2_2+\li z_{mt},\bar{\hat{z}}_m\re_{2,\Om_t}.
\end{align}
Using (\ref{aprioriEstim3''}) for the rate-dependent and (\ref{aprioriEst1}) for the rate-independent case we obtain based on the same arguments as in the (\ref{n_est4}) 
that up to a subsequence the sequence $\{\bar z_m-z_m\}_m$ converges strongly to $0$ in 
$L^2({\Om_T})$ and therefore the sequences $\{z_m\}_m$ and $\{\bar z_m\}_m$ converge weakly  to the same limit $z\in W^{1,2}(0,{T};L^{2}(\Om))$ in $L^{2}({\Om_T})$ and $\{z_{mt}\}_m$ converges weakly to $z_t\in L^2(\Om_T)$.

The functionals $\hat{I}_g:L^2(\Om_t)\to\bar{\R}$ and $\hat{I}_{g^*}:L^{2}(\Om_t)\to\bar{\R}$ defined by $\hat{I}_{g^*}(z)\equiv\int_0^{t}\int_{\Om}g^*(z(s,x))dsdx$ and $\hat{I}_g(z)\equiv\int_0^{t}\int_{\Om}g(z(s,x))dsdx$, respectively, are convex, proper and l.s.c. for both rate-dependent and rate-independent choices of the function $g$. Obviously, it holds
 $\hat{I}_{g^*}=(\hat{I}_g)^*$. Since $z_{mt}$ converges weakly to $z_t$ in $L^{2}(\Om_T)$ we obtain 
\begin{align}
\label{n_est14}
\liminf_{m\to\infty}\hat{I}_{g^*}(z_{mt})\geq \hat{I}_{g^*}(z_{t}). 
\end{align}

Due to (\ref{aprioriEstim2}) and (\ref{aprioriEst2}) there is a subsequence of $\{\bar{\Sigma}_m\}_m$, which converges weakly to $\bar{\Sigma}=-(M+\L)z-\F+\hat{z}$ in $L^2(\Om_T)$ with the function $\F\in L^2(\Om_T)$ such that $\F(t,x)=\int_{S^3\times\R^3}\pa f(\xi)\tau_{t,x}(d\xi)\ {\rm for}\ {\rm a.e.}\ (t,x)\in \Om_T$ with the Young measure $\tau\in L_w(\Om_T; \Meas(S^3\times\R^3))$ associated to the sequence $\{z_m\}_m$. Then we obtain
\begin{align}
\label{n_est15}
\liminf_{m\to\infty}\hat{I}_{g}(\bar{\Sigma}_m)\geq \hat{I}_{g}(-(M+\L)z-\F+\hat{z}). 
\end{align}

Since $z_m(t)$ converges weakly to $z(t)$ in $L^{2}(\Om)$, then $\liminf_{m\to\infty}\int_{\Om}f(z_m(x,t))dx\geq \int_{\Om}f(z(x,t))dx$. Let us fix $x\in\Om$. For every fixed $x\in\Om$ we have $z(x,\cdot)\in W^{1,2}(0,T;S^3\times\R^3)$. Since $f\in C^1(S^3\times\R^3,\R)$, we obtain that $f(z(\cdot)):[0,T]\to\R$ is an absolute continuous function and therefore $f(z(t))$ is for a.e. $t\in (0,T)$ strongly differentiable with $f(z(t))-f(z^0)=\int_0^t(\pa f(z(\tau),z_t(\tau))d\tau$. It follows from (\ref{aprioriEstim5}) and (\ref{aprioriEst5}) that $\int_{\Om}(f(z(x,t))-f(z^0(x))dx<\infty$ and hence $\int_{\Om}\int_0^t(\pa f(z(x,\tau),z_t(x,\tau))d\tau dx<\infty$. Therefore we have
\begin{align}
\label{n_est16}
\liminf_{m\to\infty}\int_{\Om}(f(z_m(x,t))-f(z^0(x))dx\geq \int_{\Om}\int_0^t(\pa f(z(x,\tau),z_t(x,\tau))d\tau dx.
\end{align} 
Due to the weak convergence of the sequences $(z_m)_m$ and $(\bar{z}_m)_m$ in the space $L^2(\Om_T)$ we obtain
\begin{align}
\label{n_est17}
&\liminf_{m\to\infty}\li (M+\L) \bar{z}_m,z_{mt}\re_{2,\Om_t}=\liminf_{m\to\infty}\li (M+\L) {z}_m,z_{mt}\re_{2,\Om_t}=\liminf_{m\to\infty}(\|(M+\L)^{\frac 12}z_m(t)\|_2^2\non\\&-\|(M+\L)^{\frac 12}z(0)\|_2^2)\geq \|(M+\L)^{\frac 12}z(t)\|_2^2-\|(M+\L)^{\frac 12}z(0)\|_2^2=\li (M+\L)z,z_t\re_{2,\Om_t}
\end{align}
Altogether we take the limit inferior on the left side and the limit on the right side of (\ref{n_est13'}) and  obtain with (\ref{n_est14})-(\ref{n_est17}) the following inequality
\begin{align}
\label{n_est10}
\hat{I}_{g^*}(z_{t})+\hat{I}_g(-(M+\L)z-\F+\hat{z})\leq \li z_{t},-(M+\L) z 
+{\hat{z}}\re_{2,\Om_t}-\int_{\Om}\int_0^t(z_t,\pa f(z)) dtdx.
\end{align}
 This completes the proof
of Theorem~\ref{existMain_rate_indep}.
\end{proof}

\begin{rem}
In the proof of Theorem \ref{existMain_rate_dep} and Theorem~\ref{existMain_rate_indep} 
the function $(r,P)$ is obtained as the limit of the weakly convergent sequence $(r_n,P_n)$ 
and the measure $\tau$ is the Young measure associated with this sequence $(r_n,P_n)$ 
(see Appendix \ref{young_measures} for the main properties of Young measures, if needed). 
As we have mentioned in Remark~\ref{meas_is_str}, the measure-valued solution $(u,\phi,r,P)$ is strong if  $\F=\na_{(r,P)}f(r,P)$ in (\ref{def_meas_sol}).  
This holds if the mapping $\na_{(r,P)}f$ is affine. However, this case has no practical applications
and therefore beyond our interests.
\end{rem}

\begin{appendix}
\section{Convex analysis}
\label{BasicsConAna}
In this section we briefly recall some basic facts about convex functions,
their subdifferentials and the surjectivity results for them.
 
Let $V$ be a reflexive Banach space with the norm $\|\cdot\|$, $V^*$ 
be its dual space with the norm $\|\cdot\|_*$. The
brackets $\left< \cdot ,\cdot \right>$ denote the duality pairing between
$V$ and $V^*$. By $V$ we shall always mean a reflexive Banach space
throughout this section.

For a function $\phi:V \to \bar{\R}$ the sets
\[\dom (\phi)=\{v\in V\mid \phi(v)<\infty\}, \ \
\epi(\phi)=\{(v,t)\in V\times \cR\mid \phi(v)\le t\}\]
are called the {\it effective domain} and the {\it epigraph} 
of $\phi$, respectively. One says that the function $\phi$ is {\it proper}
if $\dom(\phi)\not=\emptyset$ and $\phi(v)>-\infty$ for every $v\in V$.
The epigraph is a non-empty closed convex
set iff $\phi$ is a proper lower semi-continuous convex function or,
equivalently, iff $\phi$ is a proper weakly
 lower semi-continuous convex function 
(see \cite[Theorem 2.2.1]{Zalinescu02}, if needed).

 The Legendre-Fenchel
conjugate of a proper convex lower semi-continuous function $\phi : V \to 
\bar{\R}$ is the function $\phi^*$ defined for each 
$v^* \in V^*$ by
\[\phi^*(v^*)=\sup_{v \in V} \{ \left< v^*, v \right> - \phi(v)\}. \]
The Legendre-Fenchel conjugate $\phi^*$ is convex, lower semi-continuous
and proper on the dual space $ V^*$. Moreover, the 
{\it Young-Fenchel inequality} holds
\begin{eqnarray}\label{YoungFenchelIneq}
\forall v\in V,\ \forall v^*\in V^*:\ \ \phi^*(v^*)+\phi(v)\ge 
\left< v^*, v \right>,
\end{eqnarray}
and the inequality $\phi\le\psi$ implies $\psi^*\le\phi^*$ for any two
 proper convex lower semi-continuous functions $\psi,\phi: V \to 
\bar{\R}$ (see \cite[Theorem 2.3.1]{Zalinescu02}). 
Due to Proposition II.2.5 in \cite{Barb76} a proper convex 
lower semi-continuous function $\phi$ satisfies the following identity
\begin{eqnarray}\label{domainConvFunc}
\inter\dom(\phi)=\inter\dom(\partial\phi),
\end{eqnarray}
where $\partial\phi: V \to 2^{V^*}$ denotes the subdifferential 
of the function $\phi$. We note that the equality in (\ref{YoungFenchelIneq})
holds iff $v^*\in\partial\phi(v)$, i.e. together with (\ref{YoungFenchelIneq})
\begin{align}
\label{conv_ineq}
v^*\in\pa\phi(v)\Leftrightarrow \phi^*(v^*)+\phi(v)\leq \left< v^*, v \right>,\ \forall v\in V,\ \forall v^*\in V^*.
\end{align}
\begin{rem}\label{SubMax} We recall that the subdifferential of
 a lower semi-continuous proper and
convex function is maximal monotone\footnote{A monotone 
mapping $A:V \to 2^{V^*}$ is called {\rm maximal monotone} iff the
inequality
 \[\left< v^* - u^*, v - u \right> \ge 0  \ \ \ \ \forall \ u^*\in A(u)\]
implies $v^*\in A(v)$.}
 (see \cite[Theorem II.2.1]{Barb76}).
\end{rem}
The next surjectivity result on subdifferentials of convex functions is 
one of the key
tools in the proof of our main existence result.

\begin{theo}\label{SurjSubdifferential}
Let $A:=\partial\phi$, where $\phi:V\to\bar{\R}$ is
a proper convex lower semi-continuous function on $V$. Then the following
conditions are equivalent
\begin{eqnarray}\label{SurjConvFunc}
&&\lim_{\|v\|\to\infty}\frac{\phi(v)}{\|v\|}=\infty;\\
&& R(A)= V^* \; {\rm and}\;  A^{-1}\; {\rm is}\; {\rm bounded.}
\end{eqnarray}
\end{theo}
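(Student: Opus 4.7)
The plan is to prove the two implications separately, exploiting the duality between coercivity/superlinearity of $\phi$ and finiteness/local boundedness of $\phi^{*}$.

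For the direction ($\Rightarrow$), given the superlinearity hypothesis $\phi(v)/\|v\|\to\infty$, I would fix an arbitrary $v^{*}\in V^{*}$ and consider the tilted functional $\psi(v):=\phi(v)-\langle v^{*},v\rangle$. Since $|\langle v^{*},v\rangle|\le\|v^{*}\|_{*}\|v\|$, the superlinearity of $\phi$ transfers verbatim to $\psi$, which remains proper, convex, and l.s.c. Hence $\psi$ has bounded sublevel sets and, because $V$ is reflexive, these sets are weakly sequentially compact; combined with the weak lower semi-continuity of the convex $\psi$, a Weierstrass-type argument produces a minimizer $v_{0}\in V$. Fermat's rule yields $0\in\partial\psi(v_{0})=\partial\phi(v_{0})-v^{*}$, so $v^{*}\in A(v_{0})$, giving $R(A)=V^{*}$.

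The boundedness of $A^{-1}$ I would extract directly from the subgradient inequality: if $v^{*}\in A(v)$, then
\[
\phi(0)\ge\phi(v)+\langle v^{*},0-v\rangle,
\]
so $\|v^{*}\|_{*}\|v\|\ge\phi(v)-\phi(0)$. By superlinearity the right-hand side, divided by $\|v\|$, tends to $+\infty$; so whenever $v^{*}$ ranges over a bounded set of $V^{*}$, the corresponding preimages $v$ must stay bounded, proving that $A^{-1}$ sends bounded sets to bounded sets.

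For the converse ($\Leftarrow$), I would switch to the Legendre-Fenchel conjugate. Since $\phi$ is proper, convex, l.s.c., the classical identity $(\partial\phi)^{-1}=\partial\phi^{*}$ holds on $V^{*}$ (using reflexivity of $V$). The hypothesis $R(A)=V^{*}$ translates into $\dom(\partial\phi^{*})=V^{*}$, and the boundedness of $A^{-1}$ translates into local boundedness of $\partial\phi^{*}$. Via (\ref{domainConvFunc}) one concludes $\dom(\phi^{*})=V^{*}$, and local boundedness of the subdifferential of a proper convex l.s.c. functional forces $\phi^{*}$ itself to be bounded on every bounded subset of $V^{*}$: pick a bound $C_{R}$ with $\phi^{*}(v^{*})\le C_{R}$ for $\|v^{*}\|_{*}\le R$. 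Using the biconjugate identity $\phi=\phi^{**}$, for every $v\in V$,
\[
\phi(v)=\sup_{v^{*}\in V^{*}}\bigl(\langle v^{*},v\rangle-\phi^{*}(v^{*})\bigr)\ge\sup_{\|v^{*}\|_{*}\le R}\langle v^{*},v\rangle-C_{R}=R\,\|v\|-C_{R},
\]
so $\liminf_{\|v\|\to\infty}\phi(v)/\|v\|\ge R$ for every $R>0$, which gives the superlinearity (\ref{SurjConvFunc}).

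The main obstacle I anticipate is the ($\Leftarrow$) direction, specifically the step that turns the purely pointwise information (domain and local boundedness of $\partial\phi^{*}$) into the quantitative bound $\phi^{*}(v^{*})\le C_{R}$ on balls; this rests on the standard but non-trivial fact that a proper convex l.s.c. functional is locally Lipschitz on the interior of its effective domain whenever its subdifferential is locally bounded there, which one typically invokes from \cite{Barb76} rather than re-proving in place. The ($\Rightarrow$) direction is largely routine once the minimizer for $\psi$ is produced, the only care being the reflexivity argument for existence of minimizers of a coercive convex l.s.c.\ functional.
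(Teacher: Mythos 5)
Your argument is essentially correct, but note that the paper does not prove this statement at all: it simply cites Theorem II.2.6 of Barbu's monograph, so your self-contained proof is a genuinely different (and more informative) route. Your ($\Rightarrow$) direction is the standard direct method: the tilted functional $\psi=\phi-\langle v^*,\cdot\rangle$ inherits superlinearity, is weakly l.s.c.\ with bounded sublevel sets, and reflexivity yields a minimizer, whence $v^*\in\partial\phi(v_0)$ by the exact sum rule for subtracting a continuous linear functional; this is fine. Two small points deserve attention. First, in the boundedness argument you write $\phi(0)\ge\phi(v)+\langle v^*,0-v\rangle$, which is only useful if $0\in\dom(\phi)$; since $\phi$ is merely proper, you should replace $0$ by an arbitrary fixed $v_1\in\dom(\phi)$, giving $\|v^*\|_*\bigl(\|v\|+\|v_1\|\bigr)\ge\phi(v)-\phi(v_1)$, after which the same superlinearity argument bounds the preimages. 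Second, in the ($\Leftarrow$) direction the quantitative bound $\phi^*(v^*)\le C_R$ on balls does not require the local-Lipschitz theorem you anticipate invoking: since $R(A)=V^*$ gives a global minimizer of $\phi$, $\phi^*(0)=-\inf\phi$ is finite, and for $\|v^*\|_*\le R$ any $w\in\partial\phi^*(v^*)=A^{-1}(v^*)$ satisfies $\|w\|\le M_R$ by boundedness of $A^{-1}$, so the subgradient inequality $\phi^*(0)\ge\phi^*(v^*)-\langle v^*,w\rangle$ yields $\phi^*(v^*)\le\phi^*(0)+R\,M_R$ directly; the rest of your biconjugation estimate $\phi(v)\ge R\|v\|-C_R$ then closes the proof as you state. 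With these minor repairs your proof is complete, and it has the merit of making explicit where reflexivity and the identity $(\partial\phi)^{-1}=\partial\phi^*$ enter, which the paper's bare citation hides.
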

\begin{proof} See \cite[Theorem II.2.6]{Barb76}, for example.
\end{proof}
To state our next result, we recall that the relation
\[\partial\phi+\partial\psi=\partial(\phi+\psi)\]
holds for any two convex functions $\psi$ and $\phi$, if there exists
a point in $\dom(\phi)\cap\dom(\psi)$ where $\phi$ is continuous 
(see \cite[Proposition II.7.7]{Show97}). Then, since a proper convex 
lower semi-continuous function is continuous on the interior of its domain
(\cite[Proposition II.2.2]{Barb76}),
we get the following important result.
\begin{prop}\label{SumSubdiffConvFunc} Let $\phi$ be a proper convex 
lower semi-continuous function and $\psi$ be convex. Suppose that
\begin{eqnarray}\label{DomainSubdiffConvFunc}
\inter\dom(\phi)\cap\dom(\psi)\not=\emptyset.
\end{eqnarray}
Then\[\partial\phi+\partial\psi=\partial(\phi+\psi).\]
\end{prop}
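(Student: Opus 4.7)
The inclusion $\partial\phi+\partial\psi \subseteq \partial(\phi+\psi)$ is immediate from the subgradient inequalities and requires no qualification hypothesis: given $u^* \in \partial\phi(v)$ and $w^* \in \partial\psi(v)$, adding the two defining inequalities yields $u^* + w^* \in \partial(\phi+\psi)(v)$. The substance of the proposition lies in the reverse inclusion, which is the classical Moreau--Rockafellar theorem, and which I would establish by a Hahn--Banach separation argument in the product space $V\times\R$.

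Given $z^* \in \partial(\phi+\psi)(v)$, by replacing $\phi$ with $\phi - \langle z^*,\cdot\rangle$ and then subtracting constants one reduces to the case $z^* = 0$ and $\phi(v)=\psi(v)=0$, in which $v$ globally minimizes $\phi+\psi$; it then suffices to produce an $x^* \in V^*$ with $-x^* \in \partial\phi(v)$ and $x^* \in \partial\psi(v)$. To this end I would introduce the two convex sets
\[
C_1 = \{(x,t) \in V\times\R : x \in \inter\dom(\phi),\ \phi(x) < t\}, \qquad C_2 = \{(x,t) \in V\times\R : t \le -\psi(x)\}.
\]
By the continuity of $\phi$ on $\inter\dom(\phi)$ (which is non-empty by \eqref{DomainSubdiffConvFunc}), $C_1$ is non-empty and open; $C_2$ is convex. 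A common point $(x,t) \in C_1\cap C_2$ would force $\phi(x)+\psi(x) < 0$, contradicting the minimality of $v$, hence $C_1\cap C_2 = \emptyset$.

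Applying geometric Hahn--Banach produces $(x^*,c) \in (V^*\times\R)\setminus\{0\}$ and $\gamma \in \R$ with $\langle x^*,x\rangle + ct > \gamma \ge \langle x^*,y\rangle + cs$ for every $(x,t)\in C_1$ and $(y,s)\in C_2$. Letting $t \to +\infty$ inside $C_1$ forces $c \ge 0$. The delicate step, and the one I expect to be the main obstacle, is excluding $c = 0$: in that case the projections of the two inequalities onto $V$ give $\langle x^*, x\rangle > \gamma$ for $x \in \inter\dom(\phi)$ and $\langle x^*, y\rangle \le \gamma$ for $y \in \dom(\psi)$, which evaluated at any $x_0 \in \inter\dom(\phi)\cap\dom(\psi)$ (non-empty by \eqref{DomainSubdiffConvFunc}) is a direct contradiction. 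Hence $c > 0$ and after rescaling one takes $c = 1$.

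Letting $t \downarrow \phi(x)$ in the $C_1$-inequality and $s \uparrow -\psi(y)$ in the $C_2$-inequality, the separation becomes $\phi(x) + \langle x^*, x\rangle \ge \gamma \ge -\psi(y) + \langle x^*, y\rangle$ for all $x,y \in V$. Evaluating both at $v$, together with $\phi(v) = \psi(v) = 0$, pins down $\gamma = \langle x^*,v\rangle$; the two inequalities then rewrite as $\phi(x) \ge \phi(v) + \langle -x^*, x-v\rangle$ and $\psi(y) \ge \psi(v) + \langle x^*, y-v\rangle$, that is, $-x^* \in \partial\phi(v)$ and $x^* \in \partial\psi(v)$. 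Therefore $0 = (-x^*) + x^* \in \partial\phi(v) + \partial\psi(v)$, which gives $z^* \in \partial\phi(v)+\partial\psi(v)$ after undoing the reduction and completes the nontrivial inclusion.
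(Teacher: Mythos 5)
Your argument is correct in outline, but it takes a genuinely different route from the paper: the paper simply combines two quoted facts — the sum rule $\partial\phi+\partial\psi=\partial(\phi+\psi)$ under the hypothesis that $\phi$ is continuous at some point of $\dom(\phi)\cap\dom(\psi)$ (Showalter, Prop.\ II.7.7) with the continuity of a proper convex l.s.c.\ function on $\inter\dom(\phi)$ (Barbu, Prop.\ II.2.2) — whereas you reprove the Moreau--Rockafellar sum rule from scratch by separating the open strict epigraph of $\phi$ over $\inter\dom(\phi)$ from the hypograph of $-\psi$ in $V\times\R$. Your reduction to $z^*=0$, the disjointness of $C_1$ and $C_2$, the sign argument $c\ge 0$, and the exclusion of $c=0$ via a point $x_0\in\inter\dom(\phi)\cap\dom(\psi)$ are all sound; the self-contained proof is longer but makes visible exactly where the qualification (\ref{DomainSubdiffConvFunc}) enters (twice: nonemptiness/openness of $C_1$, and ruling out a vertical separating hyperplane).

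There is one step you pass over too quickly. After normalizing $c=1$ and letting $t\downarrow\phi(x)$, the separation only gives $\phi(x)+\langle x^*,x\rangle\ge\gamma$ for $x\in\inter\dom(\phi)$, not ``for all $x\in V$'' as you write; and you then evaluate this inequality at $v$, which need not belong to $\inter\dom(\phi)$ — it is only a point of $\dom(\phi)\cap\dom(\psi)$ where $\partial(\phi+\psi)$ is nonempty. The conclusion is still true, but it needs a short additional argument: for $x\in\dom(\phi)$ and $x_0\in\inter\dom(\phi)$ set $x_\lambda=(1-\lambda)x+\lambda x_0\in\inter\dom(\phi)$ for $\lambda\in(0,1]$; then $\gamma\le\phi(x_\lambda)+\langle x^*,x_\lambda\rangle\le(1-\lambda)\phi(x)+\lambda\phi(x_0)+\langle x^*,x_\lambda\rangle$, and letting $\lambda\to 0^+$ gives $\gamma\le\phi(x)+\langle x^*,x\rangle$ on all of $\dom(\phi)$ (equivalently, note $\epi(\phi)\subseteq\overline{C_1}$ and that the non-strict separation inequality survives closure). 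With this patch the determination of $\gamma$ at $v$ and the identification $-x^*\in\partial\phi(v)$, $x^*\in\partial\psi(v)$ go through as you state, and the proof is complete.
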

We will use the following chain rule
\begin{lem}
\label{chainrule}
Let $\phi:H\to\R_\infty$ be proper, convex and l.s.c. on $H$. If $z,z_t\in L^2(0,T;H)$ and if there exists $g\in L^2(0,T;H)$ with $g\in\pa\phi(z)$ a.e. on $[0,T]$, then $\phi(z)$ is absolutely continuous on $[0,T]$ and
\begin{align*}
\frac{d}{dt}\phi(z(t))=\Li h(t),\frac{dz(t)}{dt}\Re_H
\end{align*} 
holds for every $h\in\pa\phi(u)$ a.e. on $[0,T]$.
\end{lem}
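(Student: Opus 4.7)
The plan is to follow the classical Brezis chain rule for convex functions along $W^{1,2}$-trajectories, in two stages: first, show that $t\mapsto \phi(z(t))$ is absolutely continuous on $[0,T]$; second, pin down its pointwise derivative at a.e.\ $t$ using the subgradient inequality. (I interpret the lemma as stating that the formula holds for every $h\in\pa\phi(z(t))$, treating the ``$u$'' in the statement as a typographical slip for $z$.)

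For the first stage I would regularize $\phi$ by its Moreau-Yosida approximation
\[
\phi_\lambda(v) := \inf_{u\in H}\Bigl\{\phi(u)+\tfrac{1}{2\lambda}\|v-u\|_H^2\Bigr\},\qquad \lambda>0.
\]
Each $\phi_\lambda$ is convex and $C^{1,1}$ with $(1/\lambda)$-Lipschitz gradient $\na\phi_\lambda$; moreover $\phi_\lambda(v)\nearrow \phi(v)$ as $\lambda\to 0^+$ for every $v\in\overline{\dom(\phi)}$, and one has $\|\na\phi_\lambda(v)\|_H\le\|y\|_H$ for every $y\in\pa\phi(v)$. Composing with $z\in W^{1,2}(0,T;H)$, the elementary chain rule for functionals with Lipschitz gradient yields
\[
\phi_\lambda(z(t))-\phi_\lambda(z(s)) = \int_s^t\bigl\langle \na\phi_\lambda(z(\tau)),\, z_\tau(\tau)\bigr\rangle_H\, d\tau.
\]
Since $\|\na\phi_\lambda(z(\tau))\|_H\le\|g(\tau)\|_H$ with $g\in L^2(0,T;H)$, I can extract a subsequence $\na\phi_{\lambda_n}(z(\cdot))\wto h(\cdot)$ in $L^2(0,T;H)$; combining the resolvent convergence $J_{\lambda_n}z(\tau)\to z(\tau)$ with the demiclosedness of the maximal monotone operator $\pa\phi$ (Remark~\ref{SubMax}) forces $h(\tau)\in\pa\phi(z(\tau))$ for a.e.\ $\tau$. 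Passing to the limit one obtains
\[
\phi(z(t))-\phi(z(s)) = \int_s^t\bigl\langle h(\tau),\, z_\tau(\tau)\bigr\rangle_H\, d\tau,
\]
so that $\phi(z(\cdot))\in W^{1,1}(0,T)$ and is in particular absolutely continuous.

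In the second stage, fix any $t_0\in(0,T)$ at which both $z(\cdot)$ and $\phi(z(\cdot))$ are differentiable (a.e.\ $t_0$ by the step above together with $z_t\in L^2$), and let $h\in\pa\phi(z(t_0))$ be arbitrary. The subgradient inequality
\[
\phi(z(t))-\phi(z(t_0))\ge \langle h,\, z(t)-z(t_0)\rangle_H,\qquad t\in[0,T],
\]
divided by $t-t_0>0$ and passed to the limit $t\to t_0^+$ gives $\tfrac{d}{dt}\phi(z(t_0))\ge \langle h,z_t(t_0)\rangle_H$; dividing by $t-t_0<0$ and letting $t\to t_0^-$ yields the reverse inequality. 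Equality follows, and since $h$ was an arbitrary element of $\pa\phi(z(t_0))$, this is exactly the formula claimed by the lemma.

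The main obstacle I anticipate is the limit passage in the first stage: one must identify the weak $L^2$-limit of $\na\phi_{\lambda_n}(z(\cdot))$ as an honest selection of $\pa\phi(z(\cdot))$. The crucial inputs are the Moreau-Yosida inclusion $\na\phi_\lambda(v)\in\pa\phi(J_\lambda v)$, the resolvent convergence $J_\lambda v\to v$ on $\overline{\dom(\phi)}$, the uniform bound $\|\na\phi_\lambda(z(\tau))\|_H\le\|g(\tau)\|_H$, and the demiclosed graph of the maximal monotone operator $\pa\phi$. Once absolute continuity is in hand, the pointwise derivative formula for every subgradient is a purely geometric consequence of convexity and requires no further approximation.
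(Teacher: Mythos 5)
The paper itself gives no proof of Lemma~\ref{chainrule}: it is stated in Appendix~\ref{BasicsConAna} as a known fact (it is the classical chain rule of Brezis for subdifferentials of convex functions along $W^{1,2}$-trajectories, cf.\ Brezis' monograph on maximal monotone operators or Showalter), so there is nothing in the paper to compare your argument against line by line. Your proposal is exactly the standard proof of that classical result: Moreau--Yosida regularization, the uniform bound $\|\nabla\phi_\lambda(z(\tau))\|_H\le\|g(\tau)\|_H$, passage to the limit in the smooth chain rule to get absolute continuity, and then the two-sided subgradient inequality at a point of differentiability to get the derivative formula for an arbitrary $h\in\pa\phi(z(t_0))$ (and your reading of the ``$u$'' in the statement as $z$ is the intended one). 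The second stage is complete and correct as written.

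One step in the first stage needs tightening. You extract $\nabla\phi_{\lambda_n}(z(\cdot))\rightharpoonup h(\cdot)$ weakly in $L^2(0,T;H)$ and then apply the demiclosedness of $\pa\phi$ ``pointwise'' to conclude $h(\tau)\in\pa\phi(z(\tau))$ a.e.; but weak convergence in $L^2(0,T;H)$ gives no weak convergence in $H$ at (almost) any fixed $\tau$, so the strong--weak closedness of the graph of $\pa\phi$ in $H\times H$ cannot be invoked directly. The standard repair is to work with the realization of $\pa\phi$ on $L^2(0,T;H)$, i.e.\ with $\pa I_\phi$ where $I_\phi(w)=\int_0^T\phi(w(t))\,dt$ (the analogue of (\ref{conv_int})): this operator is again maximal monotone, one has $\nabla\phi_{\lambda_n}(z(\cdot))\in\pa I_\phi(J_{\lambda_n}z(\cdot))$, $J_{\lambda_n}z\to z$ strongly in $L^2(0,T;H)$ because $\|J_\lambda z(\tau)-z(\tau)\|_H\le\lambda\|g(\tau)\|_H$, and demiclosedness of this $L^2$-realization (equivalently, a monotonicity/Minty argument) yields $h\in\pa I_\phi(z)$, hence $h(\tau)\in\pa\phi(z(\tau))$ a.e. A second small point: to pass to the limit in $\phi_\lambda(z(t))-\phi_\lambda(z(s))=\int_s^t\langle\nabla\phi_\lambda(z),z_\tau\rangle\,d\tau$ you should first fix one $s$ with $z(s)\in\dom(\pa\phi)$ (such $s$ exist a.e.), use the identity and the uniform bound to conclude $\sup_\lambda\phi_\lambda(z(t))<\infty$, hence $\phi(z(t))<\infty$ for \emph{every} $t$, and only then let $\lambda\to0$ by monotone convergence on both endpoints. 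With these two routine repairs your argument is a complete and correct proof of the lemma.
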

{\bf Convex integrands.}
For a proper convex lower semi-continuous function 
$\phi:{\R}^k\to\bar{\R}$ we define a functional $I_{\phi}$ on
$L^p(\Om,{\R}^k)$ by
\begin{align}
\label{conv_int} 
I_{\phi}(v)=\begin{cases}\int_\Om\phi(v(x))dx, & \phi(v)\in 
                                      L^1(\Om,{\R}^k)\\
                       +\infty, & {\rm otherwise}
  \end{cases},
 \end{align} 
where $\Om$ is a bounded domain in ${\R}^N$ with some
$N\in{\mathbb N}$. Due to
Proposition II.8.1 in \cite{Show97} the functional $I_{\phi}$
is proper, convex, lower semi-continuous, and $v^*\in\partial I_{\phi}(v)$ iff
\begin{eqnarray}
v^*\in L^{p^*}(\Om,{\R}^k), \ 
\ v\in L^p(\Om,{\R}^k)\ \ {\rm and} \ \ 
\ v^*(x)\in\partial{\phi}(v(x)), \ {\rm a.e.} \non
\end{eqnarray} 
Due to the result of Rockafellar in \cite[Theorem 2]{Rockafellar68}
the Legendre-Fenchel conjugate of $I_{\phi}$ is equal to $I_{\phi^*}$, i.e.
\[\big(I_{\phi}\big)^*=I_{\phi^*},\]
where $\phi^*$ is the Legendre-Fenchel conjugate of $\phi$.


\section{Young measures}
\label{young_measures}

Let $E\subset \R^m$ be a Lebesgue measurable set with $\mu(E)<\infty$. We denote $C_0(\R^d)=\overline{C_c(\R^d)}^{\|\cdot\|_\infty}$. Let $\Meas (\R^d)$ of signed Radon measures with bounded total variation. There is a one-to-one correspondence between the dual space of $C_0(\R^d)$ and the space $\Meas (\R^d)$, such that $\nu\in\Meas (\R^d)$ defines a linear continuous functional on $C_0(\R^d)$ in the following sense $\li \nu,f\re=\int_{\R^d}f(x)\nu(dx)$ for $f\in C_0(\R^d)$ and $\|\nu\|_{\Meas  (\R^d)}=\sup_{\|f\|_\infty\leq 1}|\li \nu,f\re|$.  One says that $\nu\in\Meas (\R^d)$ belongs to $ {\rm Prob}(\R^d)$ if $\nu$ is a probability measure. The mapping $\tau:E\to \Meas (\R^d)$ is said to belong to the space $L^\infty_w(E,\Meas (\R^d))$, if for all $f\in L^1(E;C_0(\R^d))$ the function $x\to\li \tau_x,f(x,\cdot)\re=\int_{\R^d}f(x,\la) d\tau_x(\la)$ is measurable and $\|\tau\|_{L^\infty_w(E;\Meas (\R^d))}\equiv\esssup_{x\in E}\|\tau_x\|_{\Meas (\R^d)}<\infty$.
\begin{df}
\label{young_def}
A Young measure $\tau:E\to\Meas (\R^d)$ is an element of the space $L^\infty_w(E,\Meas (\R^d))$, such that $\tau_x\in {\rm Prob}(\R^d)$ for $\mu-$a.e. $x\in E$.
\end{df}
\begin{theo}
\label{prohorov}
Let $\{u_n\}_n:E\to\R^d$ be a norm bounded sequence in $L^1(E;\R^d)$. Then there exists a subsequence $\{u_{n_k}\}_k$ of the sequence $\{u_n\}_n$ and a Young measure $\tau\in L^\infty_w(E,\Meas (\R^d))$ such that the sequence $\{\tau_{n_k}\}_k$ konverges to $\tau$ in $L^\infty_w(E,\Meas (\R^d))$.
\end{theo}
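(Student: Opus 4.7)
The plan is to realize $L^\infty_w(E,\Meas(\R^d))$ as the dual of the separable Banach space $L^1(E;C_0(\R^d))$ and then apply the Banach--Alaoglu theorem to the sequence of elementary Young measures generated by $\{u_n\}_n$.

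First I would associate to each $u_n$ the elementary Young measure $\tau^n\in L^\infty_w(E,\Meas(\R^d))$ given pointwise by $\tau^n_x:=\delta_{u_n(x)}$, the unit point mass at $u_n(x)$. Each $\tau^n_x$ is a probability measure, hence $\|\tau^n\|_{L^\infty_w(E;\Meas(\R^d))}=1$, and for every test function $f\in L^1(E;C_0(\R^d))$ the duality pairing reduces to $\langle\tau^n,f\rangle=\int_E f(x,u_n(x))\,dx$. Using the classical isometric identification $(L^1(E;C_0(\R^d)))^*\cong L^\infty_w(E,\Meas(\R^d))$ (which relies on the separability of $C_0(\R^d)$), the sequence $\{\tau^n\}$ lies in the closed unit ball of this dual space. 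Since the predual is separable, the Banach--Alaoglu theorem delivers a subsequence $\tau^{n_k}\rightharpoonup^*\tau$ for some $\tau\in L^\infty_w(E,\Meas(\R^d))$.

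It remains to verify that the weak-$\ast$ limit $\tau$ is a Young measure in the sense of Definition~\ref{young_def}, i.e., that $\tau_x\in{\rm Prob}(\R^d)$ for $\mu$-a.e.\ $x\in E$. Non-negativity of $\tau_x$ follows by testing against $f(x,\lambda)=\chi_A(x)\varphi(\lambda)$ for arbitrary measurable $A\subset E$ and arbitrary non-negative $\varphi\in C_0(\R^d)$, since each $\tau^{n_k}_x$ is manifestly non-negative. The normalization $\tau_x(\R^d)=1$ a.e.\ is obtained by selecting an increasing sequence $\varphi_j\in C_c(\R^d)$ with $0\le\varphi_j\uparrow 1$, passing to the weak-$\ast$ limit in $\int_A\varphi_j(u_{n_k}(x))\,dx\to\int_A\langle\tau_x,\varphi_j\rangle\,dx$ for every measurable $A\subset E$, and then sending $j\to\infty$ by monotone convergence.

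The main obstacle is ruling out loss of mass at infinity in this last step: since $C_0(\R^d)$ only sees the tight part of the limiting measures, one must show that no mass of $\tau^{n_k}_x$ escapes to $\infty$ in the weak-$\ast$ passage. Here the uniform $L^1$-bound together with $\mu(E)<\infty$ enters crucially, via the Chebyshev estimate $\mu(\{|u_n|>R\})\le\|u_n\|_{L^1(E)}/R$, uniform in $n$. This provides the required tightness of the family $\{\tau^n\}$, which combined with the monotone passage above yields $\int_E\tau_x(\R^d)\,dx=\mu(E)$ and therefore $\tau_x(\R^d)=1$ a.e., completing the identification of $\tau$ as a Young measure.
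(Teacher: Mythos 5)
Your proposal is correct, and in fact the paper offers no proof of this statement at all: Theorem~\ref{prohorov} is stated in Appendix~\ref{young_measures} as a known result (it is the classical fundamental theorem of Young measures, in the spirit of Ball's version), so there is nothing in the text to compare against except the standard argument, which is exactly what you reproduce. Your route --- identify $L^\infty_w(E,\Meas(\R^d))$ with the dual of the separable space $L^1(E;C_0(\R^d))$, embed the sequence via the elementary Young measures $\tau^n_x=\delta_{u_n(x)}$ lying in the closed unit ball, extract a weak-$*$ convergent subsequence by sequential Banach--Alaoglu, and then check nonnegativity and unit mass of the limit disintegration --- is the standard and correct proof. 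You also correctly isolate the only delicate point, namely that $C_0$-test functions cannot detect mass escaping to infinity, and the Chebyshev bound $\mu(\{|u_n|>R\})\le \sup_n\|u_n\|_{L^1}/R$ is precisely the uniform tightness that the $L^1$-bound supplies.

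One small step you should make explicit: from $\int_A\langle\tau_x,\varphi_j\rangle\,dx\ge\mu(A)-C/R_j$ (with $\varphi_j=1$ on the ball of radius $R_j$) and monotone convergence you obtain $\int_A\tau_x(\R^d)\,dx\ge\mu(A)$ for every measurable $A\subset E$, hence $\tau_x(\R^d)\ge1$ a.e.; to conclude equality you need the opposite bound $\tau_x(\R^d)\le\|\tau_x\|_{\Meas(\R^d)}\le1$ a.e., which follows because the closed unit ball of the dual is weak-$*$ closed (equivalently, by weak-$*$ lower semicontinuity of the dual norm), so the limit $\tau$ inherits $\|\tau\|_{L^\infty_w}\le1$. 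You state that the $\tau^{n_k}$ lie in the unit ball, so this is implicit in your argument, but it is the ingredient that turns ``total mass at least one'' into ``probability measure a.e.'' and deserves a sentence.
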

\begin{theo}
\label{young_exist}
Let $\{u_n\}_n:E\to\R^d$ be a sequence of measurable functions and $\{\tau_n\}_n$ be a sequence of Young measures associated to functions $\{u_n\}_n$ such that $\tau_n\to \tau$.
Let $\Phi:\R^d\to\R$ be continuous and suppose that $\{\Phi(u_n)\}_n$ is uniformly integrable.

Then $$\int_{E}\int_{\R^d}|\Phi(\xi)|\tau_x(d\xi)\mu(dx)<\infty$$
and $\Phi(u_n)$ converges  $(L^1,L^\infty)$ to $w$, where
$$w(x)=\int_{\R^d}\Phi(\xi)\tau_x(d\xi)\ {\rm for}\ \mu-{\rm a.e.}\ x\in E.$$
\end{theo}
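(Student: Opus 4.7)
}
The strategy is to reduce the claim for a continuous, possibly unbounded $\Phi$ to the case of $C_0(\R^d)$-integrands, where the weak-$\ast$ hypothesis $\tau_n\to\tau$ in $L^\infty_w(E,\Meas(\R^d))$ applies directly, and to close the argument by combining the assumed uniform integrability of $\{\Phi(u_n)\}$ with a tightness property for $\{u_n\}$ that is automatic because $\tau$ is probability-valued.

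I would fix smooth cut-offs $\eta_M\in C_0(\R^d)$ with $\eta_M\equiv 1$ on $B_M$, $\supp\eta_M\subset B_{M+1}$, $0\le\eta_M\le 1$, and set $\Phi_M:=\Phi\,\eta_M\in C_0(\R^d)$. Since each $\tau_n$ is associated to $u_n$, i.e.\ $\tau_{n,x}=\delta_{u_n(x)}$ for a.e.\ $x$, the definition of $\tau_n\to\tau$ applied to the integrand $(x,\xi)\mapsto g(x)\Phi_M(\xi)$ with $g\in L^\infty(E)\subset L^1(E)$ (using $\mu(E)<\infty$) yields
\[
\int_E g(x)\Phi_M(u_n(x))\,\mu(dx)\;\to\;\int_E g(x)\,w_M(x)\,\mu(dx),\qquad w_M(x):=\int_{\R^d}\Phi_M(\xi)\,\tau_x(d\xi).
\]
Taking $g\equiv 1$, replacing $\Phi_M$ by $|\Phi_M|\in C_0(\R^d)$, and using that uniform integrability on a finite-measure space implies $L^1$-boundedness of $\{\Phi(u_n)\}$, gives $\int_E\int|\Phi_M|\,\tau_x(d\xi)\,d\mu\le C$ with $C$ independent of $M$. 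Fatou's lemma as $M\to\infty$ then proves the first assertion $\int_E\int_{\R^d}|\Phi|\,\tau_x(d\xi)\,d\mu<\infty$, so $w\in L^1(E)$ is well defined.

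For the weak-$L^1$ convergence of $\Phi(u_n)$ to $w$, I decompose, for $g\in L^\infty(E)$,
\[
\int_E g(\Phi(u_n)-w)\,d\mu = \int_E g(\Phi-\Phi_M)(u_n)\,d\mu+\int_E g(\Phi_M(u_n)-w_M)\,d\mu+\int_E g(w_M-w)\,d\mu.
\]
The middle term vanishes as $n\to\infty$ for every fixed $M$ by the preceding step; the third is dominated by $\|g\|_\infty\int_E\int_{\{|\xi|>M\}}|\Phi|\tau_x(d\xi)\,d\mu$ and vanishes as $M\to\infty$ by dominated convergence on $\tau_x(d\xi)\otimes\mu(dx)$. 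The first term is at most $\|g\|_\infty\int_{\{|u_n|>M\}}|\Phi(u_n)|\,d\mu$, which I would split over $\{|\Phi(u_n)|>K\}$ and its complement: uniform integrability handles the former uniformly in $n$ for $K$ large, and the latter is bounded by $K\,\mu(\{|u_n|>M\})$.

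The crux of the proof, and the step I expect to be the main obstacle, is the uniform tightness bound $\sup_n\mu(\{|u_n|>M\})\to 0$ as $M\to\infty$: because $\Phi$ need not be coercive, uniform integrability alone cannot control the tails of $u_n$. I would derive it by applying the weak-$\ast$ convergence to the test $\eta_M$ (with $g\equiv 1$): this gives $\int_E\eta_M(u_n)\,d\mu\to\int_E\int\eta_M(\xi)\tau_x(d\xi)\,d\mu$, and since $\tau_x$ is a probability measure a.e.\ the right-hand side tends to $\mu(E)$ as $M\to\infty$ by monotone convergence; together with $\mu(\{|u_n|>M+1\})\le\mu(E)-\int_E\eta_M(u_n)\,d\mu$, this yields the desired tightness (the finitely many initial $n$ being handled since each single $u_n$ is a.e.\ finite). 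Choosing $K$ first, then $M$, then $n$ makes each of the three pieces arbitrarily small and completes the proof.
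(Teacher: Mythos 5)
The paper itself offers no proof of Theorem \ref{young_exist}: it is quoted in Appendix \ref{young_measures} as a standard background fact (the fundamental theorem on Young measures, in the form used e.g. by Ball and in Roub\'{\i}\v{c}ek's book), so there is no in-paper argument to compare against. Your proposal is a correct, essentially self-contained proof along the classical truncation route: testing the weak-$\ast$ convergence $\tau_n\to\tau$ in $L^\infty_w(E,\Meas(\R^d))$ against integrands $g(x)\Phi\eta_M(\xi)$ with $g\in L^\infty(E)$ is legitimate because $\mu(E)<\infty$ puts these in $L^1(E;C_0(\R^d))$, the Fatou step gives the finiteness of $\int_E\int_{\R^d}|\Phi|\,\tau_x(d\xi)\,\mu(dx)$ from the $L^1$-bound on $\{\Phi(u_n)\}$, and the three-term splitting with the order ``$K$ first, then $M$, then $n$'' closes the weak $L^1$ convergence. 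You also correctly identify the genuine crux, namely the uniform tightness $\sup_n\mu(\{|u_n|>M\})\to0$, and your derivation of it from testing with $\eta_M$ and monotone convergence is exactly the standard argument.

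One point deserves to be made explicit: your tightness step uses that the limit $\tau$ is itself a Young measure, i.e. $\tau_x$ is a probability measure for a.e.\ $x$. This is implicit in the paper's framework (Definition \ref{young_def} and Theorem \ref{prohorov} produce such a limit), but it is a genuine hypothesis rather than a consequence of weak-$\ast$ convergence alone; without it mass can escape to infinity and the conclusion fails (e.g.\ $u_n\equiv n$, $\Phi$ bounded with $\Phi(\xi)\to0$ as $|\xi|\to\infty$, where the weak-$\ast$ limit is the zero measure). A second, minor remark: that uniform integrability implies $L^1$-boundedness is automatic with the definition $\sup_n\int_{\{|\Phi(u_n)|>K\}}|\Phi(u_n)|\,d\mu\to0$ on a finite measure space, which is the definition intended here; stating this explicitly would make the first step airtight.
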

\end{appendix}

\bibliographystyle{plain} 
{\footnotesize
\bibliography{literaturliste}
}
\section{Notations}
\label{Notations}

\begin{tabular}{l}
$(\cdot,\cdot),\ |\cdot|$: skalar product and norm in $\R^3$ or $S^3$\\
$\li\cdot,\cdot\re,\ \|\cdot\|_2$: skalar product and norm in $L^2(\Om)$ or $L^2(\Om)$\\
$\langle\cdot,\cdot\rangle_{p,p^*},\ \|\cdot\|_p$: bilinear form on $L^p(\Om)\times L^{p^*}(\Om)$ and norm in $L^p(\Om)$\\
$\li\cdot,\cdot\re_{k,p,\Om},\ \|\cdot\|_{k,p,\Om}$: skalar product and norm in $W^{k,p}(\Om)$\\
$[\cdot,\cdot]$: is a bilinear form on $W_0^{1,p}(\Om)\times W^{-1,p^*}(\Om)$.
\end{tabular}
\end{document}